\newtheorem{theorem}{Theorem}[section]
\newtheorem{lemma}[theorem]{Lemma}
\newtheorem{proposition}[theorem]{Proposition}
\newtheorem{corollary}[theorem]{Corollary}
\newtheorem{conjecture}[theorem]{Conjecture}
\theoremstyle{definition}
\theoremstyle{remark}
\theoremstyle{remark}
\newtheorem{example}[theorem]{Example}
\theoremstyle{remark}
\newcommand{\wt}{\widetilde}
\begin{document}
\title{A family of sand automata}

\date{}

\author{{\bf Nicholas  Faulkner}\footnote{This research was the content of Nicholas
 Faulkner's MSc thesis} \\ University of Ontario Institute of Technology\\
 {\bf Reem Yassawi}\footnote{Partially supported by an NSERC grant.}\\
Trent University, Peterborough, Canada\\
ryassawi@trentu.ca}

\maketitle

\begin{abstract} 
We study some dynamical properties of a family of two-dimensional
cellular automata: those that arise from an underlying one dimensional
sand automaton whose local rule is obtained using a latin square. We
identify a simple sand automaton $\Gamma$ whose local rule is algebraic, and classify this automaton as having equicontinuity points, but not being equicontinuous. We also show it is not surjective. We generalise some of these results to a wider class of sand automata.

\end{abstract}

\section{Introduction}\label{Introduction}
In \cite{cfm}, the authors introduce the family of {\em sand
  automata}: these are dynamical systems $\Phi: {\mathcal A}^{{\mathbb
    Z}^{d}}\rightarrow {\mathcal A}^{{\mathbb Z}^{d}}$, where $\mathcal
A$ is an countably infinite alphabet, that satisfy certain
constraints (see Section \ref{preliminaries} for all definitions).  In turn, with the appropriate topology $\mathcal T$ put
on ${\mathcal A}^{{\mathbb Z}^{d}}$, $( {\mathcal A}^{{\mathbb
    Z}^{d}}, \Phi)$ is topologically conjugate to a cellular automaton
$\Phi_{{{1}\choose{0}}} :S_{{{1}\choose{0}}} \rightarrow
S_{{{1}\choose{0}}}$ where $S_{{{1}\choose{0}}}\subset
\{0,1\}^{{\mathbb Z}^{d+1}}$ is a subshift of finite type. In
\cite{dgm} the authors ask  if any of  the cellular automata arising from sand automata  are chaotic. This is a particular instance of the more
general project of finding chaotic higher dimensional cellular
automata. In this article we study a family of sand automata and their dynamical properties.

In Section \ref{linear}, we define {\em linear} sand automata: these are automata whose local
rules are built using a group endomorphism $\phi: {\mathcal
  A}^{2r+1}\rightarrow {\mathcal A}$, where the finite alphabet
$\mathcal A$ is viewed as a cyclic group. We say that local rules are
`built', because, unlike cellular automata, the initial configuration
has to be {\em relativised} before the local rule is applied, and the
output of the local rule is {\em added} to the initial
configuration. In general we work exclusively with one dimensional sand automata (ie those
whose corresponding cellular automata act on two dimensional
configuration space), whose local rules have radius 1. We frequently work with one fixed sand automaton,
$\Gamma: {\mathcal A}^{\mathbb Z}\rightarrow {\mathcal A}^{\mathbb Z}$, whose local rule is built using the rule $\gamma:{\mathbb
  Z}_{5}^{3}\rightarrow {\mathbb Z}_{5}$ defined by $\gamma
({x_{-1}\,x_{0} \,x_{1}}) = x_{-1}+x_{1}$, which is the local rule for
the famous XOR cellular automaton (albeit on a different alphabet), and  whose dynamical properties have been studied extensively, for example in \cite{mm}, \cite{cfmm} and \cite{ino}.

Although we are interested in the dynamical properties of the cellular
automaton $\Phi_{{{1}\choose{0}}}$, in practice we often work with
$\Phi$, since all of the dynamical properties that we discuss are
topological invariants. In Section \ref{non_surjective}, we show that $\Gamma$ is not surjective -
recall that a cellular automaton which is not surjective cannot be
transitive. In Section \ref{surjective}, we identify a proper $\Gamma$-invariant subspace, ${\mathcal G}$,
and show that for points ${\bf x}$ in ${\mathcal G}$, computation of
$\Gamma^{n}(x)$ is simple, for all $n$. We identify other sand
automata for which ${\mathcal G}$ acts similarly - this accounts for
about 17\% of all radius one sand automata, all of which cannot be
transitive.

In Section \ref{equicontinuity} we show that $\Gamma$ is not equicontinuous, by showing the existence
of (many) {\em vertical inducing points}. In fact all radius one sand
automata whose local rule tables have, for some non-zero $m$, the value  $m$ appear in each column (or
each row),  have vertical inducing points, and so
are not equicontinuous. This means that at least $99\%$ of all sand
automata are not equicontinuous.  Despite not being
equicontinuous, we show that $\Gamma$ has equicontinuity points by
finding a {\em blocking word} for $\Gamma$. This completes the classification of $\Gamma$ according to the classification scheme in \cite{dgm}.

In Section \ref{local_rule_constant}, we generalise the definition of
a vertical inducing point to that of a {\em local rule constant
  point}. Both vertical inducing and local rule constant points have
easily computable $\Phi^{n}$ iterates. We identify a
$\Gamma$-invariant subspace $\mathcal G^{*}$, in which all points are
local rule constant. We define a subspace $\mathcal G'$ containing
$\mathcal G^{*}$, and conjecture that $\mathcal G'$ is an attractor for $\Gamma$, in that $\lim_{n\rightarrow \infty}d(\Gamma^{n}{\bf x}, \mathcal G')=0$ whenever ${\bf x}\in {\mathcal A}^{\mathbb Z}$.

\section{Preliminaries}\label{preliminaries}
\subsection{Notation }\label{notation}
If $\mathcal A$ is a countable alphabet, let $\mathcal A^{+}$ the set
of  finite concatenations of
letters from $\mathcal A$; elements of $\mathcal A^{+}$ are called {\em words}.  If $d$ is a natural number, we let $X=
\mathcal A^{\mathbb Z^{d}}:=\{\textbf{x} =
\{(x_{{\bf n}})_{{\bf n} \in \mathbb{Z}^d } :
x_{{\bf n}}\in \mathcal A \mbox{ for each }
{\bf n}\in \mathbb Z^{d}\}$; we call $X$ the
$d$-dimensional {\em configuration space } on the alphabet
$\mathcal{A}$, and elements of $X$ are called {\em configurations}. If
${\bf x}\,\in \mathcal A^{\mathbb Z}$, we sometimes write ${\bf x} =
\ldots x_{-1}.x_{0}x_{1} \ldots $. If
$\textbf{x} \in \mathcal A^{\mathbb Z}, \, m,n \in \mathbb{Z}, \, m \leq n$ let
$\textbf{x}_{[m,n]} := x_{m} x_{m+1} \ldots x_n$. We will be working in dimensions 1
and 2, and in dimension 2 only when $\mathcal A$ is finite.

We let $\mathcal{A}_r:=\{-r,\ldots ,r\}$, and, if $\mathcal A$ is given, let 
$\widetilde{\mathcal{A}}= \mathcal{A} \cup \{-\infty,\infty\}$.   
Loosely speaking if $d=2$ then \textbf{x} represents a
3-dimensional sandpile where $x_{{\bf n}}$ represents the
number of sand grains at ``location" ${\bf n}$.  If
$x_{{\bf n}} = \infty$ there is a {\em source} at location
${\bf n}$ and if $x_{{\bf n}} = -\infty$ there
is a {\em sink} at location ${\bf n}$. 

Let $L\subset {\mathcal A}^{\mathbb Z^{d}}$ be a finite set. If ${\bf
  c} = \{c_{\bf l}: c_{\bf l} \in \mathcal A, \mbox{ for } {\bf l}\in
\mathbb Z_{d}\cap L\}$, define the {\em cylinder set} $[{\bf c}]=\{
x:x_{\bf l} = c_{\bf l}, \mbox{ for } {\bf l} \in L\}$.  If
$\mathcal{A}$ is finite the Cantor topology - generated by the
cylinder sets, as $L$ and ${\bf c}$ vary - makes ${\mathcal A}^{\mathbb Z^{d}}$ compact.  If $\mathcal{A} =
\widetilde{\mathbb{Z}}$ this  is no longer the case.
  Because of this it is convenient to embed $ \wt{\mathbb
  Z}^{\mathbb Z} $ in $\{0,1\}^{\mathbb Z^{2}}$, and equip it with the
subspace topology $\mathcal T$ that it inherits from $\{0,1\}^{\mathbb
  Z^{2}}$: this is what is done in $\cite{dgm}$.  Define the map
$e:{\widetilde{\mathbb Z}}^{\mathbb Z} \rightarrow
\{0,1\}^{\mathbb{Z}^2}$ by \[(e(\textbf{x}))_{i,j} =
\left\{\begin{array}{rl} 1 & \textit{if } j \leq x_i \\ 0 & \textit{if
} j > x_i \\
\end{array} \right. \,.\]
In other words, the amount of sand $x_{i}$ at location $i$ in ${\bf x}$ is recorded in the  $i$th column of $e({\bf x})$, where there are ones for all column entries that are indexed by at most $x_{i}$, and zeros above entry  $x_{i}$.
If  $x_{i}= \infty$, the $i$-th column of $e({\bf x})$ is constantly 1,
 and similarly if  $x_{i}=-\infty$  the $i$-th column of $e({\bf x})$ is constantly 0.

The transformation $e:X\rightarrow e(X)$ is bijective and its image $e(X)$ is a subshift
of finite type, where the unique forbidden word is
$ \begin{array}{|c|c|} \hline 1 \\ \hline 0 \\ \hline
\end{array} $.
If $\{0,1\}^{\mathbb Z^{2}}$ is equipped with the Cantor topology then
 since $e(X)$ is closed in
$\{0,1\}^{\mathbb{Z}^2}$,  it is also compact.  Let $\mathcal T$
be the topology on $X={\wt {\mathbb Z}}^{\mathbb Z}$ inherited from the
subspace topology on $e(X)\subset \{0,1\}^{{\mathbb Z}^{2}}$.

\subsection{Dynamical systems}\label{dyn_sys}
We recall some basic definitions.  If $X$ is a topological space and
$F:X\rightarrow X$ be continuous, then $(X,F)$ is a {\em topological
  dynamical system}.  $(X,F)$ is {\em transitive} if for all nonempty
open sets $ U, V \subseteq X, \exists n \geq 0 $ such that $ F^{-n}(U)
\cap V \neq \emptyset$.  $(X,F)$ is {{\em equicontinuous}} at $ x
\in X$ if $ \forall \epsilon >0, \exists\delta >0, \forall y \in
B_{\delta}(x), \forall n > 0,d(F^n(x),F^n(y))<\epsilon $; and $(X,F)$
is {\em equicontinuous} if $F$ is equicontinuous at all points in $X$.
 $(X,F)$ is {\em sensitive to initial conditions} if $ \exists \epsilon
> 0, \forall x \in X, \forall \delta > 0, \exists y \in B_{\delta}(x),
\exists n > 0, d(F^n(x), F^n(y))>\epsilon$. Finally $(X,F)$ is {{\em
    positively expansive}} if $ \exists \epsilon > 0, \forall x \neq
y, \exists n \geq 0, d(F^n(x), F^n(y)) \geq \epsilon $. A point $ x \in X$ is {periodic} if
 $F^n(x)=x$ for some $n$; if $n=1$ then $x$ is {\em fixed}.  We call the smallest such $n$ the
period of x.  The point $ x\in X$ is {\em eventually periodic} if
$\exists p, n \in \mathbb{N} $ such that $F^{p+n}(x) = F^p(x)$; we
call $p$ the {\em preperiod}.  A dynamical system is \emph{ultimately
  periodic} if $\forall x \in X$, x is eventually periodic.

A dynamical system is {\em chaotic} if is sensitive, transitive and has a dense set of periodic points. In \cite{bbcds} it is shown that 
if $(X,F)$ is transitive and has a dense set of periodic points then $(X,F)$ is sensitive. 
 In \cite{cm}, the authors show that if $(X,F)$ is a  cellular automaton (see below for definitions) then  transitivity alone implies sensitivity.

\subsection{Cellular and sand automata}\label{def_san_automata}

Let $X={\mathcal A}^{\mathbb Z}$ where $\mathcal A$ is a finite
alphabet.  A {\em cellular automaton} $F:X\rightarrow X$ is a map
defined by a {\em local rule} $f:{\mathcal A}^{2r+1}\rightarrow
\mathcal A$, so that for each $n\in \mathbb Z$, $(F({\bf x}))_{n}=
f(x_{n-r},\ldots x_{n+r})$. The simplest example of a cellular
automaton is the {\em shift map} $\sigma:X\rightarrow X$, where
$(\sigma({\bf x}))_{n}=x_{n+1} $.  If $X={\mathcal A}^{\mathbb
  Z^{d}}$, a cellular automaton can be defined similarly: if
$L_{r}:=\{{\bf l}: {\bf l}=(l_{1},l_{2}),\,\,|l_{i}|\leq r\}$ is the
2-dimensional box of radius $r$, and $f: {\mathcal
  A}^{L_{r}}\rightarrow {\mathcal A}$ is a map, then a cellular
automaton $F:{\mathcal A}^{\mathbb Z^{2}} \rightarrow {\mathcal
  A}^{\mathbb Z^{2}}$ is defined as $(F({\bf x}))_{\bf n} = f(\{x_{\bf n+i}: {\bf i}\in L_{r}\}).$
There are two shift maps on
two-dimensional lattice spaces $X={\mathcal A}^{{\mathbb Z}^{d}}$: let
$(\sigma_H({\bf x}))_{{\bf n}}= x_{{\bf n}+(1,0)} $,
$(\sigma_V({\bf x}))_{{\bf n}}= x_{{\bf n}+(0,1)}$ be the horizontal and
vertical shifts respectively.  In \cite{h}, Hedlund showed that $F$ is
a cellular automaton if and only if $F$ is a continuous, shift
commuting map.

Let  $X=\wt{\mathbb Z}^{\mathbb Z}$. Like cellular automata, sand automata are  defined in \cite{cfm} as functions $\Phi: X \rightarrow X$ which have a local rule $\phi$, except that the output of the local rule is added to the original entry.
First, in \cite{cfm} the authors  define a sequence of ``measuring
instruments" of precision $r$.
If $n \in \mathbb{N}$ and $m \in \mathbb{Z}$ then the {\em measuring
  tool of precision $r$ at reference height $m$} is the function
$\beta_r^m: \widetilde{\mathbb{Z}} \rightarrow
\widetilde{\mathcal{A}_r}$ where \begin{equation}\beta _r^m(n) =
\left\{ \begin{array}{ll} + \infty & \mbox{ if } n > m+r, \\ -\infty
  & \mbox{ if } n < m-r, \\ n-m & \mbox{ otherwise.} \\
\end{array} \right. \end{equation}

 Let $\phi:\wt{\mathcal A_{r}}^{2r}\rightarrow \mathcal A_{r+1}$ be given. Define 
 \[\Phi(\textbf{x})_i = \left\{\begin{array}{ll} x_i + \phi(\beta_r^{x_i}(x_{i-r}), \ldots \beta_r^{x_i}(x_{i-1}),\beta_r^{x_i}(x_{i+1}), \ldots \beta_r^{x_i}(x_{i+r})) & \mbox{if } x_i \in \mathbb{Z} \\
x_{i} & \mbox{ if } x_i = \pm \infty \\ 
\end{array} \right.\]

Thus 
$\Phi(\textbf{x})_i$ differs from $x_i$ by at most $r+1$, and is a
function of the cylinder $x_{[i-r,i+r]}$.  Note that $\phi$ is
applied not to $x_{[i-r,i+r]}$ but to a relativised version of
this word.

As a book-keeping device we define a projection $\Pi: X \rightarrow
\{\widetilde{\mathcal{A}}^{2r}_{r}\}^{\mathbb{Z}}$ by $\Pi(\textbf{x})_i =
\beta_r^{x_i}(x_{i-r}), \ldots
\beta_r^{x_i}(x_{i-1}),\beta_r^{x_i}(x_{i+1}), \ldots
\beta_r^{x_i}(x_{i+r})$.  Clearly $\Pi$ is not injective. 
We refer to $\Pi(\textbf{x})_i$ as a {\em gradient tuple}, and 
later in this article we use $L_1,\ldots L_r$ to describe the gradients
to the left of the reference position and $R_1, \ldots R_r$ to
describe the gradients to the right of the reference position.

\begin{example}
  This is Example 12 in  \cite{cfm}, which emulates the 
 behaviour of the original model defined in
\cite{btw}.  Define a 1 dimensional
sand automaton $F$ whose local rule $\phi: \wt{\mathcal A_{1}}^{2}\rightarrow \mathcal A_{1}$
 is given by:
\[\phi(a,b) = \left\{ \begin{array}{ll} 1 & \mbox{ if } a = \infty,
  \, b \neq -\infty \\ -1 & \mbox{ if } a \neq \infty, \, b = -\infty
  \\ 0 & \mbox{ otherwise }\\
\end{array}   \right. \]
$F$ has the property that a grain of sand falls to the right (and only the right) if the right neighbour is at least 2 smaller.  If the number of sand grains in the initial configuration ${\bf x}$ is finite, then $F^{n}(x)$ is eventually fixed.
\end{example}

Define the {\em vertical map} $\rho:X\rightarrow X$ where 
\[\rho(\textbf{x})_i =   \left\{ \begin{array}{ll} x_{i}+1 & \mbox{ if } |x_{i}|<\infty,
   \\ x_{i} & \mbox{ if } |x_{i}|= \infty,\\ 
 \end{array}   \right. \]
and  say that $ \Phi $ is
\emph{vertical commuting} if $ \Phi(\rho(x)) = \rho (\Phi(x)) $.
Also, say that $ \Phi $ is \emph{infiniteness conserving} if $
\Phi(\textbf{x})_i = \pm \infty \Leftrightarrow x_i = \pm \infty
$. Note that all sand automata are shift commuting, vertical
commuting, and infiniteness conserving; in fact this characterises 
them, as shown in Theorem 17 of \cite{cfm}:
\begin{theorem}
 $\Phi: X \rightarrow X $ is a sand automaton if and only if $\Phi $ is continuous, shift commuting, vertical commuting and infiniteness conserving.
\end{theorem}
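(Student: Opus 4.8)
The statement is a Hedlund-type characterisation, so the plan is to prove the two implications separately, treating the easy ``only if'' direction first.

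Suppose $\Phi$ is a sand automaton with local rule $\phi:\wt{\mathcal A_r}^{2r}\to\mathcal A_{r+1}$. Infiniteness conservation is immediate from the two-line definition, since $\phi$ takes only finite values, so $\Phi(\textbf{x})_i=\pm\infty$ exactly when $x_i=\pm\infty$. Shift commutation is a one-line computation using that the same $\phi$ is applied at every site: $\Phi(\sigma\textbf{x})_i=x_{i+1}+\phi(\Pi(\textbf{x})_{i+1})=\Phi(\textbf{x})_{i+1}$. For vertical commutation the key observation is that $\beta_r^{m+1}(n+1)=\beta_r^m(n)$ and $\beta_r^{m+1}(\pm\infty)=\beta_r^m(\pm\infty)$, so $\Pi(\rho\textbf{x})_i=\Pi(\textbf{x})_i$ whenever $x_i$ is finite, whence $\Phi(\rho\textbf{x})_i=(x_i+1)+\phi(\Pi(\textbf{x})_i)=\rho(\Phi\textbf{x})_i$; the infinite case is trivial. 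For continuity I would check that each coordinate $\textbf{x}\mapsto(e(\Phi\textbf{x}))_{i,j}$ of the embedded image is locally constant: on the clopen set $\{x_i=m\}$ ($m\in\Z$) the value $\Phi(\textbf{x})_i$ depends only on the bounded window of $e(\textbf{x})$ encoding $\Pi(\textbf{x})_i$; and on $\{x_i=+\infty\}$ (resp. $\{x_i=-\infty\}$) one uses that $\Phi$ changes heights by at most $r+1$, so the clopen neighbourhood $\{x_i'\ge j+r+1\}$ (resp. $\{x_i'\le j-r-2\}$) of $\textbf{x}$ keeps $(e(\Phi\textbf{x}'))_{i,j}$ constantly $1$ (resp. $0$).

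For the converse, suppose $\Phi$ is continuous, shift commuting, vertical commuting and infiniteness conserving. The plan is to extract the local rule from the single ``central'' map $\delta(\textbf{x}):=\Phi(\textbf{x})_0-x_0$, defined on the open set $\{\textbf{x}:x_0\in\Z\}$. By infiniteness conservation $\delta$ takes values in $\Z$, and by vertical commutation $\delta(\rho\textbf{x})=\delta(\textbf{x})$. The crucial reduction is to the \emph{slice} $K:=\{\textbf{x}:x_0=0\}$, which is closed in $e(X)$ and hence compact. On $K$, $\delta$ is the continuous map $\textbf{x}\mapsto\Phi(\textbf{x})_0$ into $\wt\Z$; its image is a compact subset of $\wt\Z$ contained in $\Z$, hence finite, so $\delta$ is bounded on $K$ and, by $\rho$-invariance, bounded on all of $\{x_0\in\Z\}$. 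Being continuous into discrete $\Z$, $\delta$ is locally constant, so by compactness of $K$ there is an $r$ such that any two $\textbf{x},\textbf{y}\in K$ that agree on the box $[-r,r]^2$ of the embedding have $\delta(\textbf{x})=\delta(\textbf{y})$; enlarging $r$ we may also assume $|\delta|\le r+1$. Since configurations in $K$ with the same gradient tuple $\Pi(\textbf{x})_0$ agree on $[-r,r]^2$, setting $\phi(\Pi(\textbf{x})_0):=\delta(\textbf{x})$ gives a well-defined $\phi:\wt{\mathcal A_r}^{2r}\to\mathcal A_{r+1}$, total because every tuple is realised by some $\textbf{x}\in K$. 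Finally, using $\rho$-invariance of $\delta$ together with $\beta_r^0((\rho^{-m}\textbf{x})_i)=\beta_r^m(x_i)$ one gets $\Phi(\textbf{x})_0=x_0+\phi(\Pi(\textbf{x})_0)$ whenever $x_0\in\Z$, and shift commutation (iterated) propagates this to every site: $\Phi(\textbf{x})_i=(\Phi(\sigma^i\textbf{x}))_0=(\sigma^i\textbf{x})_0+\phi(\Pi(\sigma^i\textbf{x})_0)=x_i+\phi(\Pi(\textbf{x})_i)$. Combined with infiniteness conservation for the $\pm\infty$ case, this exhibits $\Phi$ as the sand automaton with local rule $\phi$.

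The main obstacle is the boundedness and finite-dependence of $\delta$ in the converse: the topology $\mathcal T$ is not compact on $X$, and $\delta$ lives only on a non-closed subset, so one cannot argue by compactness directly. The device that makes it work is vertical commutation, which both makes $\delta$ constant along $\rho$-orbits and lets one replace $X$ by the compact slice $K=\{x_0=0\}$ without losing information. The only other fiddly point is getting the bookkeeping between the box metric on $e(X)$, the measuring tools $\beta_r^m$, and the gradient tuples $\Pi(\textbf{x})_i$ to match up, but this is routine.
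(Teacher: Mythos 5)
The paper does not actually prove this statement: it is quoted as Theorem 17 of \cite{cfm}, so there is no internal proof to compare against. Your argument is, however, a correct proof, and it follows the route one would expect (and essentially the route of the original source): a Curtis--Hedlund--Lyndon-type argument. The forward direction is handled properly, including the only delicate point, continuity of the coordinates $(e(\Phi\textbf{x}))_{i,j}$ at configurations with $x_i=\pm\infty$, which you resolve correctly via the clopen neighbourhoods $\{x_i'\ge j+r+1\}$ and $\{x_i'\le j-r-2\}$ together with the bound $|\Phi(\textbf{x})_i-x_i|\le r+1$ (note $\{x_i=+\infty\}$ itself is closed but not open, so this detour is genuinely needed and you take it). In the converse, the two ideas that make the non-compactness of $(X,\mathcal T)$ harmless are exactly the right ones: vertical commutation makes $\delta(\textbf{x})=\Phi(\textbf{x})_0-x_0$ constant along $\rho$-orbits, and the slice $K=\{x_0=0\}$ is clopen in $e(X)$, hence compact, so continuity of $\delta$ into $\wt{\Z}$ plus infiniteness conservation forces a finite image and a uniform radius $r$; the passage from ``agreement on the box $[-r,r]^2$'' to ``same gradient tuple'' is legitimate because, on $K$, the tuple $\Pi(\textbf{x})_0$ determines $e(\textbf{x})$ on that box, and your enlargement of $r$ takes care of the codomain requirement $\phi:\wt{\mathcal A_r}^{2r}\to\mathcal A_{r+1}$. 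Shift commutation then propagates the local rule to all sites, and infiniteness conservation covers the infinite entries, so the proof is complete as written.
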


The number $r$ in the definitions of cellular and sand automata is called the {\em radius}.
{\em In this article we only consider sand automata of radius 1.}

\subsection{Modelling sand automata as cellular automata}
Using the injection $e:{\wt{\mathbb Z}}^{\mathbb Z}\rightarrow
 \{0,1\}^{{\mathbb Z}^{2}}$, 
we can transform a sand automaton into a 2-dimensional cellular automaton as done in \cite{dgm}. 
 Letting $S_{{{1}\choose{0}}} $ denote $e(X)$, 
 define $\Phi_{{{1}\choose{0}}} =e \circ \Phi \circ e^{-1}:S_{{{1}\choose{0}}}\rightarrow S_{{{1}\choose{0}}}$. With this notation, we have the following lemma, whose proof is straightforward:

\begin{lemma}\label{sand=cellular}

$\Phi_{{{1}\choose{0}}} $ commutes with both the vertical and horizontal shifts, and 
if $X$ is endowed with the topology $\mathcal T$, then $(X,  F)\cong (S_{{{1}\choose{0}}}, \Phi_{{{1}\choose{0}}})$.

\end{lemma}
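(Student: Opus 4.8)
The plan is to verify the two assertions in turn, each reducing to a routine unwinding of definitions. For the conjugacy $(X,F)\cong(S_{{{1}\choose{0}}},\Phi_{{{1}\choose{0}}})$, the natural candidate conjugating map is $e$ itself. I would first observe that $e:X\to e(X)=S_{{{1}\choose{0}}}$ is a bijection, as already noted in Section \ref{notation}, and that by construction $\Phi_{{{1}\choose{0}}}=e\circ\Phi\circ e^{-1}$, so the diagram commutes on the nose; thus the only genuine content is that $e$ is a homeomorphism when $X$ carries the topology $\mathcal T$. But $\mathcal T$ was \emph{defined} in Section \ref{notation} to be exactly the subspace topology that $X$ inherits from $\{0,1\}^{\mathbb Z^2}$ via $e$, so $e$ is tautologically a homeomorphism onto its image $e(X)=S_{{{1}\choose{0}}}$, which is closed (hence compact) in $\{0,1\}^{\mathbb Z^2}$. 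Here I would also note that $\Phi$ is continuous with respect to $\mathcal T$ because $\Phi$ is a sand automaton (Theorem above), so $\Phi_{{{1}\choose{0}}}$ is continuous, making $(S_{{{1}\choose{0}}},\Phi_{{{1}\choose{0}}})$ a bona fide topological dynamical system and the conjugacy meaningful.

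For the shift-commutation statements, I would work directly from the formula for $e$. Writing $\sigma$ for the shift on $X$, one checks from $(e(\mathbf{x}))_{i,j}=1\iff j\le x_i$ that $e\circ\sigma=\sigma_H\circ e$: applying $\sigma$ to $\mathbf x$ shifts the columns of $e(\mathbf x)$ horizontally by one, which is precisely $\sigma_H$. Since $\Phi$ commutes with $\sigma$ (all sand automata are shift commuting, as recorded above), it follows that $\Phi_{{{1}\choose{0}}}=e\Phi e^{-1}$ commutes with $\sigma_H$. Similarly, from the same formula one checks $e\circ\rho=\sigma_V^{-1}\circ e$, where $\rho$ is the vertical map: adding $1$ to every finite entry $x_i$ raises each column of ones by one level, which matches $\sigma_V^{-1}$; the cases $x_i=\pm\infty$ are consistent since those columns are already constant. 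Because $\Phi$ is vertical commuting, $\Phi(\rho(\mathbf x))=\rho(\Phi(\mathbf x))$, and conjugating by $e$ gives $\Phi_{{{1}\choose{0}}}\circ\sigma_V^{-1}=\sigma_V^{-1}\circ\Phi_{{{1}\choose{0}}}$, hence $\Phi_{{{1}\choose{0}}}$ commutes with $\sigma_V$ as well.

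I do not anticipate a serious obstacle; the lemma is essentially bookkeeping, and the paper itself flags the proof as straightforward. The one point requiring a little care is the interplay between $e$ and the \emph{infiniteness conserving} property: one should confirm that $e$ genuinely sends $X$ onto the subshift of finite type whose only forbidden pattern is the vertical word $\begin{array}{|c|c|}\hline 1\\\hline 0\\\hline\end{array}$, so that $\Phi_{{{1}\choose{0}}}$ is well defined on all of $S_{{{1}\choose{0}}}$ and preserves it — and this is exactly where the three structural properties (continuity, shift and vertical commutation, infiniteness conservation) from Theorem above are used to guarantee $\Phi_{{{1}\choose{0}}}(S_{{{1}\choose{0}}})\subseteq S_{{{1}\choose{0}}}$. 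Beyond that, everything follows by chasing the defining formula for $e$ through the relevant diagrams.
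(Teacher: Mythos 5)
Your proof is correct and is essentially the routine definitional unwinding that the paper itself omits as ``straightforward'': $e$ is tautologically a homeomorphism for $\mathcal T$, the conjugacy is built into the definition $\Phi_{{{1}\choose{0}}}=e\circ\Phi\circ e^{-1}$, and the horizontal/vertical commutation follows from $e\circ\sigma=\sigma_H\circ e$ and $e\circ\rho=\sigma_V^{-1}\circ e$ together with shift- and vertical-commutation of $\Phi$ (your sign bookkeeping for $\sigma_V^{-1}$ is right and harmless, since commuting with $\sigma_V^{-1}$ is equivalent to commuting with $\sigma_V$). The only small inaccuracy is in your closing remark: the inclusion $\Phi_{{{1}\choose{0}}}(S_{{{1}\choose{0}}})\subseteq S_{{{1}\choose{0}}}$ needs nothing beyond $\Phi(X)\subseteq X$ and the bijectivity of $e$ onto $S_{{{1}\choose{0}}}$, so the structural properties (in particular infiniteness conservation) are not what guarantees it, but this does not affect the validity of the argument.
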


\subsection{Linear sand automata}\label{linear}
If $|\mathcal A|=2k+1$, then $\mathcal A$ can be viewed as the
additive group $\mathbb Z_{2k+1}= \{-k, \ldots ,0, \ldots
,k\}$. Recall that 
$F:{\mathbb Z}_{2k+1}^{\mathbb Z}\rightarrow {\mathbb Z}_{2k+1}^{\mathbb Z}$ is a {\em linear} cellular
automaton if its local rule $f:\mathbb Z_{2k+1}^{2r+1}\rightarrow \mathbb
Z_{2k+1}$ is a group homomorphism. Thus $f(\textbf{x}_{[i-r,i+r]}) =
a_0x_{i-r}+a_1x_{i-r+1}+ \ldots +a_{2r+1}x_{i+r}$ where $a_i$ and
$x_i$ $\in \mathbb{Z}_{2k+1}$ for  $0\leq i\leq 2r+1$. The topological properties of
$F$ depend on the coefficients $a_{i}$: for example if the $a_{i}'s$
are relatively prime, then $F$ is topologically transitive, so that many
linear cellular automata are chaotic.

Next we describe how to define a sand automaton using a linear cellular
automaton. Recall we only consider sand automata of radius 1. In this case  we
can display the local rule in terms of a \emph{local rule table}. A
local rule table has a row for each possible left gradient $L$ and a
column  for each possible right gradient $R$.  Entry $(L,R)$ of the table is $\phi(L,R)$. For example, Figure \ref{localruletableexample} is  the local rule table for the sand automaton $\Omega$ defined in Section \ref{def_san_automata}.
The local rule table is applied after each gradient
pair is calculated.  For example, if  $ \textbf{x} = \ldots,4,\cdot
2,1,\ldots $ then $(\Pi(x))_{0} = {{\infty}\choose{-1}}$,   so $L=\infty$ and 
 $R=-1$, so that $(\Omega({\bf x}))_{0} = x_{0}+1=3$.

\begin{figure}[htb]
\begin{center}
\begin{tabular}{c|c|c|c|c|c}
$L \backslash R $&$-\infty$&-1&0&1&$\infty$ \\ \hline
$-\infty$&-1&0 &0 &0 &0  \\ \hline  
-1& -1& 0&0 & 0&0 \\ \hline
0& -1& 0& 0&0& 0 \\ \hline
1& -1&0 &0& 0&  0\\ \hline
$\infty$& 0&1&1 &1 &1  \\ \hline
\end{tabular} 
\end{center}
\caption{The local rule table for the sand automaton $\Omega$.}
\label{localruletableexample}
\end{figure}

We now define {\em linear } sand automata of radius $r$. Recall that
to define $(F({\bf x}))_{0}$
 first we relativise, sending $(x_{-1}, x_{0}, x_{1})$ to 
$(\beta^{x_{0}}_1(x_{-1}),\beta^{x_{0}}_1(x_{1}))\in 
\widetilde{\mathcal{A}}_1^{2}$. 
Let $f:\widetilde{\mathcal{A}}_1
\rightarrow \mathbb{Z}_{5}$ be the bijection defined by

\[( f(x))_i = \left\{ \begin{array}{ll}
2 & \mbox{ if } x_i  = \infty \\
1 & \mbox{ if } x_i  = 1 \\
0 & \mbox{ if } x_i  = 0 \\
-1 & \mbox{ if } x_i  = -1 \\
-2 & \mbox{ if } x_i  = -\infty  
\end{array}
\right.\] 

Given a group homomorphism $\phi^*:\mathbb{Z}_{5}^{2} \rightarrow
\mathbb{Z}_{5}$, we say that a sand automaton $\Phi$ is a {\em linear}
sand automaton if the local rule $\phi = \phi^*\circ  f\circ
\Pi$.

\begin{example}\label{basic_example}

Let $\gamma_{*} : \mathbb{Z}_{5}^{2} \rightarrow \mathbb{Z}_{5} $ be defined by  $\gamma_{*}(x,y) = x \oplus y$. 
\end{example}

Let $\Gamma $ be the linear sand automaton with local rule $\gamma = \gamma_{*} \circ f\circ \Pi$.    
Then $ \Gamma $ is a radius 1 linear sand automaton. The local rule table in 
Figure \ref{specificlocalruletable}
corresponds to the group homomorphism $\gamma_{*}$.  Note that here the rows and columns are indexed by ${\mathbb  Z}_{5}$.

\begin{figure}[htb]
\begin{center}
\begin{tabular}{c|c|c|c|c|c}
$L \backslash R $&-2&-1&0&1&2 \\ \hline
-2&1&2 &-2 &-1 &0  \\ \hline  
-1& 2& -2& -1&0 & 1 \\ \hline
0& -2& -1& 0&1& 2 \\ \hline
1& -1& 0&1& 2& -2 \\ \hline
2& 0&1&2 & -2&-1  \\ \hline
\end{tabular} 
\end{center}
\caption{The local rule table for $\Gamma$}
\label{specificlocalruletable}
\end{figure}

In this article we will often be working with $\Gamma$, even though we
are interested primarily in $\Gamma_{{{1}\choose{0}}}$ (see Lemma
\ref{sand=cellular}). For, $\Gamma_{{{1}\choose{0}}}$ has radius $r=7$
(in fact the local neighbourhood can be a 7x3 rectangle) grid, so it is often more practical to work with the radius one $\Gamma$. Figure \ref{SAtoCA2} contains  the local rule table for $\Gamma_{{{1}\choose{0}}}$.

\section{Non-surjectivity of $\Gamma$}\label{non_surjective}
A non-surjective sand
automaton $F$ has {\em Garden of Eden states}:  these are configurations that have no $F$-pre-image.
A non-surjective cellular automaton  cannot be chaotic, since it cannot be transitive. The surjectivity of sand automata is shown to be undecidable in \cite{cfm}.
In this section we show that the sand automaton $\Gamma$ is not surjective, and generalise this result to some other one dimensional sand automata.

Recall that a sand automaton $\Phi:X\rightarrow X$ is {\em surjective
  on a set } $Y\subset  X$ if for each $ {\bf y} \in Y,$ 
$\Phi({\bf y'}) = \textbf{y}$ for some ${\bf y'} \in Y$.
 A configuration is \emph{finite} if all configuration entries are finite, and only finitely many entries are non-zero; let $\mathcal F$ denote all such points. Similarly, let $\mathcal P$ denote the set of all $\sigma$-periodic configurations, all of whose entries are finite.
In \cite{cfm} (Proposition 3.14), the following was shown:

\begin{lemma}\label{surjectivity1}[\cite{cfm}]
Let $\Phi:X\rightarrow X$ be a one-dimensional sand automaton. Then
\begin{enumerate} 
 \item
$\Phi$ is surjective on $\mathcal P$ if and only if $\Phi$ is surjective, and
\item
If $\Phi$ is surjective on $\mathcal F$, then $\Phi$ is surjective.
\end{enumerate}
\end{lemma}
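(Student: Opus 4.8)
The plan is to combine two facts: (i) the image $\Phi(X)$ is closed in $(X,\mathcal T)$, and (ii) both $\mathcal P$ and $\mathcal F$ are dense in $(X,\mathcal T)$. Granting these, the implications ``surjective on $\mathcal P$ $\Rightarrow$ surjective'' and ``surjective on $\mathcal F$ $\Rightarrow$ surjective'' are immediate: if $\Phi$ is surjective on $\mathcal P$, then $\mathcal P\subseteq\Phi(\mathcal P)\subseteq\Phi(X)$, and since $\Phi(X)$ is closed and $\mathcal P$ is dense we get $X=\overline{\mathcal P}\subseteq\Phi(X)$; likewise with $\mathcal F$ in place of $\mathcal P$ (this already gives part (2) outright). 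Closedness of $\Phi(X)$ holds because $(X,\mathcal T)$ is compact --- it is homeomorphic via $e$ to the SFT $e(X)\subseteq\{0,1\}^{\mathbb Z^2}$, which is closed, hence compact --- and every sand automaton is continuous, so $\Phi(X)$ is a compact, hence closed, subset of the Hausdorff space $X$.

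For density I would argue directly from the definition of $\mathcal T$ via $e$. Given $\mathbf x\in X$ and $k\in\mathbb N$, it suffices to produce $\mathbf x'\in\mathcal P$ (resp. $\mathcal F$) with $e(\mathbf x')$ and $e(\mathbf x)$ agreeing on the box $[-k,k]^2$. For $|i|\le k$ set $x_i'=x_i$ if $-k\le x_i\le k$, set $x_i'=k+1$ if $x_i>k$ or $x_i=\infty$, and set $x_i'=-k-1$ if $x_i<-k$ or $x_i=-\infty$; a short check of these three cases shows the $i$-th columns of $e(\mathbf x')$ and $e(\mathbf x)$ then coincide on rows $-k,\dots,k$. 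For $|i|>k$ the box imposes no constraint, so one is free to extend $\mathbf x'$ periodically (landing in $\mathcal P$) or by zeros (landing in $\mathcal F$). Since $d_{\mathcal T}(\mathbf x,\mathbf x')\to 0$ as $k\to\infty$, both $\mathcal P$ and $\mathcal F$ are dense.

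It remains to prove the hard direction of (1): if $\Phi$ is surjective then it is surjective on $\mathcal P$. Fix $\mathbf y\in\mathcal P$ of $\sigma$-period $n$; being periodic with finite entries, $\mathbf y$ takes only finitely many values, say $M:=\max_i|y_i|<\infty$. Pick any $\mathbf x$ with $\Phi(\mathbf x)=\mathbf y$. Since $\Phi$ is infiniteness conserving all $x_i$ are finite, and since $\Phi(\mathbf x)_i$ differs from $x_i$ by at most $r+1$ (here $r=1$), we get $|x_i|\le M+r+1=:M'$ for every $i$. Thus the whole fibre $\Phi^{-1}(\mathbf y)$ lies in the finite-alphabet space $\Lambda^{\mathbb Z}$ with $\Lambda=\{-M',\dots,M'\}$, on which $\Phi$ is given by a genuine position-independent local rule $g:\Lambda^{2r+1}\to\mathbb Z$, so $\Phi^{-1}(\mathbf y)=\{\mathbf z\in\Lambda^{\mathbb Z}: g(z_{[i-r,i+r]})=y_i\ \text{for all }i\}$. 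This set is nonempty and $\sigma^n$-invariant, and is cut out by constraints depending on $i$ only through $i\bmod n$. Recoding $\Lambda^{\mathbb Z}$ by blocks of length $n$ (which conjugates $\sigma^n$ to the shift) turns it into a nonempty subshift of finite type over $\Lambda^n$: read on a bounded number of consecutive aligned blocks, and with the positions frozen by the period of $\mathbf y$, the constraints become a finite list of forbidden patterns. A nonempty SFT has a periodic point; pulling it back yields $\mathbf z\in\Phi^{-1}(\mathbf y)$ that is $\sigma^n$-periodic, hence $\sigma$-periodic, with all entries finite, i.e. $\mathbf z\in\mathcal P$ --- exactly what is required. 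Part (2) then also follows from (1) together with the density of $\mathcal P$, if one prefers to deduce it rather than argue directly from density of $\mathcal F$.

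The main obstacle is this last step. Density of $\mathcal P$ and $\mathcal F$ and closedness of $\Phi(X)$ are routine once $\mathcal T$ is unwound, but extracting a \emph{periodic} preimage of a periodic configuration requires two genuine observations: that the bound $|\Phi(\mathbf x)_i-x_i|\le r+1$ confines the entire fibre to one fixed finite alphabet, taming the infinite alphabet $\widetilde{\mathbb Z}$; and that the resulting fibre, being locally defined and $\sigma^n$-invariant, block-recodes to a finite-type subshift, so that the elementary ``nonempty SFT has a periodic point'' principle applies. Some care is needed to verify that the recoding really yields finitely many forbidden patterns, and this is precisely where the periodicity of $\mathbf y$ (rather than mere boundedness) is used.
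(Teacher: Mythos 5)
The paper does not actually prove this lemma: it is imported verbatim from \cite{cfm} (Proposition 3.14 there), so there is no in-paper argument to compare yours against. Judged on its own, your proof is correct and self-contained. The easy directions are handled exactly as one would hope: $e(X)$ is the closed (hence compact) SFT $S_{{1}\choose{0}}$, so $(X,\mathcal T)$ is compact and $\Phi(X)$ is closed, and your column-truncation construction (replacing entries above $k$ or equal to $\infty$ by $k+1$, below $-k$ or $-\infty$ by $-k-1$, then extending periodically or by zeros) does give density of $\mathcal P$ and $\mathcal F$, which yields part (2) and one direction of (1). The hard direction is where the real content lies, and your two observations are the right ones: infiniteness conservation together with the bound $|\Phi(\mathbf x)_i-x_i|\le r+1$ (which holds for any radius $r$, so the parenthetical ``here $r=1$'' is unnecessary --- the lemma is stated for general one-dimensional sand automata) traps the whole fibre of a periodic $\mathbf y$ inside a fixed finite alphabet $\Lambda^{\mathbb Z}$, on which $\Phi$ acts by a genuine local rule; and since the constraints $g(z_{[i-r,i+r]})=y_i$ depend on $i$ only through $i\bmod n$, the $n$-block recoding of the fibre is a nonempty one-dimensional SFT over $\Lambda^n$, hence contains a periodic point, whose pullback is a $\sigma$-periodic, finite-entry preimage of $\mathbf y$. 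Each step (the recoding producing finitely many forbidden patterns of bounded length, shift-invariance of the recoded constraint set coming from the $\sigma$-periodicity of $\mathbf y$, and the standard fact that a nonempty one-dimensional SFT has periodic points) checks out, so your argument would serve as a complete proof of the cited result.
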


We show that  $\Gamma$ is not surjective by first showing
 that there is a word which has
no predecessor word under $\Gamma$.  
In the following proof the notation
$[n]_5$ denotes the projection of $ n \in \mathbb{Z}$
to $\mathbb{Z}_5$.
\begin{lemma}\label{nonsurjectivity}
Let $ \textbf{w} = (100,\, 3,\, 2,\, 100 )$.  Then there is no word \textbf{y}  such that $\Gamma(\textbf{y}) = \textbf{w}$.
\end{lemma}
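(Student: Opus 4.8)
The plan is to show directly that no length-4 word $\textbf{y} = (y_{-1}, y_0, y_1, y_2)$ (or, more carefully, no configuration whose relevant window produces $\textbf{w}$) can satisfy $\Gamma(\textbf{y}) = \textbf{w}$, by analyzing the two interior coordinates. The automaton $\Gamma$ is radius $1$, so the value $\Gamma(\textbf{y})_0 = 2$ depends only on $y_{-1}, y_0, y_1$, and $\Gamma(\textbf{y})_1 = 100$ depends only on $y_0, y_1, y_2$; the outer values $100$ at positions $-1$ and $2$ play essentially the role of a ``source from above'' as seen from their neighbours (they lie far above $y_0$ and $y_1$, so they relativise to $+\infty$). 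Recall that $\Gamma(\textbf{y})_i = y_i + \gamma(\Pi(\textbf{y})_i)$ where the correction term $\gamma(L,R) \in \{-2,-1,0,1,2\}$ is read off the table in Figure \ref{specificlocalruletable} (after relativising $y_{i\pm 1}$ against $y_i$ via $\beta_1^{y_i}$ and applying $f$).

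First I would pin down the constraints coming from the two finite output entries whose value is far from the inputs. Since $\Gamma(\textbf{y})_0 = y_0 + \gamma(L_0,R_0) = 2$ with the correction in $[-2,2]$, we get $y_0 \in \{0,1,2,3,4\}$; similarly from $\Gamma(\textbf{y})_1 = y_1 + \gamma(L_1, R_1) = 100$ we get $y_1 \in \{98,99,100,101,102\}$. In particular $y_1 - y_0 \geq 98 - 4 = 94 > 1$, so from coordinate $0$'s point of view the right neighbour $y_1$ relativises to $+\infty$, i.e. $R_0 = \infty$ and $f(R_0) = 2$. Likewise $y_0 - y_1 \leq -94 < -1$, so from coordinate $1$'s point of view the left neighbour $y_0$ relativises to $-\infty$, i.e. $L_1 = -\infty$ and $f(L_1) = -2$. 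Reading the table, row $L$, column $2$ gives $\gamma(L_0, \infty) = f(L_0) \oplus 2$, which for $L_0$ ranging over $\{-2,-1,0,1,2\}$ (i.e. $f(L_0) \in \mathbb{Z}_5$) takes the values $\{0, 1, 2, -2, -1\}$ — that is, the correction at coordinate $0$ is $\gamma(L_0,\infty) \equiv f(L_0) + 2 \pmod 5$. Similarly column $R$, row $-\infty$ gives $\gamma(-\infty, R_1) = -2 \oplus f(R_1) \equiv f(R_1) - 2 \pmod 5$.

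Next I would combine these with the arithmetic modulo $5$. We need $y_0 + \gamma(L_0,\infty) = 2$ exactly (as integers), and $y_0 \in \{0,\ldots,4\}$, so $\gamma(L_0,\infty) = 2 - y_0$; reducing mod $5$, $f(L_0) + 2 \equiv 2 - y_0$, i.e. $f(L_0) \equiv -y_0 \pmod 5$. But $L_0$ is the left gradient at coordinate $0$, determined by $y_{-1}$ relative to $y_0$: since $y_{-1} = 100$ and $y_0 \leq 4$, we have $y_{-1} - y_0 \geq 96 > 1$, so actually $L_0 = \infty$ as well, forcing $f(L_0) = 2$, hence $y_0 \equiv -2 \equiv 3 \pmod 5$, so $y_0 = 3$. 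Then $\gamma(\infty,\infty) = 2 - 3 = -1$, which matches the table entry in row $\infty$, column $2$, namely $-1$ — consistent so far. Now I do the same at coordinate $1$: $y_2 = 100$ and $y_1 \geq 98$, so $y_2 - y_1 \leq 2$; this is the one place where $y_2$ might \emph{not} relativise to $\pm\infty$, so I split into cases according to $\beta_1^{y_1}(y_2) = \beta_1^{y_1}(100)$, which is $+\infty$ if $y_1 \leq 97$ (impossible here), $100 - y_1$ if $98 \leq y_1 \leq 101$, and $-\infty$ if $y_1 \geq 102$. In each case $R_1 \in \{2, 1, 0, -1, -\infty\}$ respectively, so $f(R_1) \in \{2,1,0,-1,-2\}$, and $\gamma(-\infty, R_1) \equiv f(R_1) - 2 \pmod 5$ takes values $\{0,-1,-2,2,1\}$ pointwise. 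The equation $y_1 + \gamma(-\infty,R_1) = 100$ with $y_1 \in \{98,99,100,101,102\}$ then has to be checked case by case: each choice of $y_1$ determines $R_1$ and hence the correction, and one checks that $y_1 + \gamma(-\infty, R_1) \neq 100$ in every case — this mismatch is the crux, and it is exactly analogous to the clean contradiction $y_0 = 3$ forced above but now the forced value disagrees with the required congruence. Writing out the five cases in a short table and observing all five fail gives the contradiction; hence no such $\textbf{y}$ exists.

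The main obstacle I anticipate is bookkeeping around the relativisation of the ``large'' entries $100$: one must be careful that the entries at positions $-1$ and $2$ of $\textbf{w}$ (also $100$) do not impose further constraints forcing a consistent solution, and that ``word'' here means a finite pattern on positions $-1,0,1,2$ with the understanding that only the correction-term inputs at $0$ and $1$ are constrained — the values $\textbf{w}_{-1} = \textbf{w}_2 = 100$ are automatically consistent with \emph{some} extension since the only thing they pin down is $\beta$-relativisations, not exact values, of still-further neighbours we are free to choose. So the real content is the two-coordinate calculation above, and I expect the proof to reduce to: (i) the gap estimates forcing $L_0 = R_0 = \infty$ and $L_1 = -\infty$; (ii) the mod-$5$ computation forcing $y_0 = 3$; and (iii) the finite case analysis on $y_1 \in \{98,\dots,102\}$ showing the output at coordinate $1$ can never equal $100$. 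Step (iii) is where the specific numbers $3$ and $2$ in $\textbf{w}$ matter — they are chosen precisely so that the congruence forced at coordinate $0$ is incompatible with the one forced at coordinate $1$.
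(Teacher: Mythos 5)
Your overall strategy (bound each preimage entry within $\pm2$ of the corresponding target entry, use large gaps to force relativisations to $\pm\infty$, then extract a mod-$5$ contradiction) is the right one and is in spirit the paper's, but the execution has a genuine gap: you repeatedly substitute values of the \emph{image} $\textbf{w}$ for entries of the unknown \emph{preimage} $\textbf{y}$. The assertions ``$y_{-1}=100$'' and ``$y_2=100$'' are not available: $100$ is a value of $\textbf{w}$, and all you may conclude is that the preimage entry sitting \emph{under} an output $100$ lies in $[98,102]$, while the preimage entry under the output $3$ lies in $[1,5]$. In $\textbf{w}=(100,\,3,\,2,\,100)$ the letter to the left of the $2$ is the $3$, not a $100$, so the left gradient at your coordinate $0$ is computed against an entry in $[1,5]$ and is not forced to be $+\infty$; your deduction $y_0=3$ therefore collapses. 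Symmetrically, the coordinate carrying the right-hand output $100$ has its \emph{right} neighbour outside $\textbf{w}$ altogether, hence completely unconstrained; your five-case analysis there silently assumes that neighbour is $100$, and without that assumption there is no contradiction (for example $y_1=100$ with right neighbour $\geq 102$ gives left gradient $-\infty$, right gradient $+\infty$, correction $-2\oplus 2=0$, output $100$). Indeed the pair $(2,100)$, and even the triple $(3,2,100)$, do have preimages --- the window $5,\,5,\,3,\,100,\,102$ maps its three interior cells to $3,\,2,\,100$ --- so no contradiction can be extracted from the letters your argument actually uses.

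The missing idea is that the two \emph{interior} outputs $3$ and $2$ must be played off against each other, with \emph{both} outer $100$'s used only to force the two outward gradients (the left gradient at the cell under the $3$ and the right gradient at the cell under the $2$) to be $+\infty$. That is exactly what the paper does: writing $y_2,y_3$ for the entries under $3$ and $2$, the constraints $y_2\in[1,5]$, $y_3\in[0,4]$ together with the forced $+\infty$'s give $y_2+\bigl(2\oplus f(\beta_1^{y_2}(y_3))\bigr)=3$ and $y_3+\bigl(f(\beta_1^{y_3}(y_2))\oplus 2\bigr)=2$; reducing mod $5$ and adding (the inner gradient terms cancel, being negatives of each other) yields $[y_2]_5\oplus[y_3]_5=1$, which leaves only five candidate pairs $(y_2,y_3)$, each then ruled out by the exact integer equation. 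Your proposal never couples the two interior equations, so it cannot reach this step; to repair it you would need to redo the bookkeeping with the correct alignment of $\textbf{w}$ and then carry out precisely this two-coordinate argument.
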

\begin{proof} 
Suppose that  $ \textbf{y} = y_0\, y_1\, y_2\, y_3\, y_4\, y_5 $ is a word of length 6 such that  $\Gamma(\textbf{y}) = \textbf{w}$. Then 
\[ 98 \leq y_1 \leq 102, \,\,\,
1 \leq y_2 \leq 5, \,\,\,
0 \leq y_3 \leq 4, \mbox{ and  }
98 \leq y_4 \leq 102\, .\,\,\,\]
This implies that $\beta_1^{y_2}(y_1) = \beta_1^{y_3}(y_4) = \infty$, ie $f(\beta_1^{y_2}(y_1)) =f(\beta_1^{y_3}(y_4))=f(\infty) =2$.
Next under the action of $\Gamma$ we have: 
\begin{equation}
y_2 +   f(\beta_1^{y_2}(y_1)) + f(\beta_1^{y_2}(y_3)) =y_2 + 2 +  f(\beta_1^{y_2}(y_3)) = 3 
\label{321one}
\end{equation} 
\begin{equation}
y_3+  f(\beta_1^{y_3}(y_4))+  f(\beta_1^{y_3}(y_2))=y_3+ 2 + f(\beta_1^{y_3}(y_2))=2
\label{321two}
\end{equation}
Projecting (\ref{321one}) and (\ref{321two}) into $\mathbb{Z}_5$, we have 
\begin{equation}
[y_2]_5 \oplus 2 \oplus [y_3-y_2]_5 = -2
\label{321three}
\end{equation}
\begin{equation}
[y_3]_5 \oplus 2 \oplus [y_2-y_3]_5 = 2
\label{321four}
\end{equation} 
Adding (\ref{321three}) and (\ref{321four}) we get 
\begin{equation}
[y_2]_5\oplus [y_3]_5=1
\label{321five}
\end{equation}
Thus the only possibilities for $[y_2]_5$ and $[y_3]_5$ are given by:\\
\begin{center}
\begin{tabular}{c|c|c|c|c|c}
$[y_2]_5 $ / $[y_3]_5 $&-2&-1&0&1&2 \\ \hline
-2&x& & & &  \\ \hline  
-1& & & & &x \\ \hline
0& & & &x&  \\ \hline
1& & &x& &  \\ \hline
2& &x& & &  \\ \hline
\end{tabular} \\ \vspace{10pt}
\end{center}
This implies that the only possibilities for $y_2,y_3$ are: \\ \vspace{10pt}
\begin{center}
\begin{tabular}{c|c|c|c|c|c}
$y_2$ / $y_3$ &3&4&0&1&2 \\ \hline
3&x& & & &  \\ \hline  
4& & & & &x \\ \hline
5& & & &x&  \\ \hline
1& & &x& &  \\ \hline
2& &x& & &  \\ \hline
\end{tabular} \\ \vspace{10pt}
\end{center}
Each of these cases implies a contradiction to Equation (\ref{321five}).


\end{proof}

\begin{proposition}\label{FPsurjectivity}
$\Gamma$ is not $\mathcal P$-surjective, so that $\Gamma$ is not surjective.

\end{proposition}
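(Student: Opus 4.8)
The plan is to upgrade the forbidden word $\mathbf{w}=(100,3,2,100)$ of Lemma \ref{nonsurjectivity} to a forbidden $\sigma$-periodic configuration, and then invoke Lemma \ref{surjectivity1}(1). Concretely, let $\mathbf{z}\in X$ be the period-$4$ configuration obtained by repeating the block $100\,3\,2\,100$, so that $z_{4k}=100$, $z_{4k+1}=3$, $z_{4k+2}=2$, $z_{4k+3}=100$ for all $k\in\mathbb{Z}$; in particular $z_0 z_1 z_2 z_3 = 100\,3\,2\,100$. All entries of $\mathbf{z}$ are finite, so $\mathbf{z}\in\mathcal{P}$. I will show that $\mathbf{z}$ has no $\Gamma$-preimage in $X$ whatsoever; in particular it has no preimage in $\mathcal{P}$, so $\Gamma$ is not $\mathcal{P}$-surjective, and Lemma \ref{surjectivity1}(1) then yields that $\Gamma$ is not surjective.

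For the key claim, suppose towards a contradiction that $\mathbf{y}\in X$ satisfies $\Gamma(\mathbf{y})=\mathbf{z}$. Since $\Gamma$ has radius $1$ and all relevant entries are finite, each value $(\Gamma(\mathbf{y}))_i$ is a function of the length-$3$ window $y_{i-1}y_i y_{i+1}$ only; hence the four values $(\Gamma(\mathbf{y}))_0,\ldots,(\Gamma(\mathbf{y}))_3$, which are $z_0,\ldots,z_3 = 100,3,2,100$, are already determined by the length-$6$ word $\mathbf{v}:=y_{-1}y_0 y_1 y_2 y_3 y_4$. Unwinding the definition of $\Gamma$ applied to a finite word exactly as in the statement of Lemma \ref{nonsurjectivity} (after the obvious index shift), $\mathbf{v}$ is a length-$6$ word with $\Gamma(\mathbf{v})=\mathbf{w}$, contradicting Lemma \ref{nonsurjectivity}.

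I do not expect a serious obstacle here: the substantive case analysis was already completed in Lemma \ref{nonsurjectivity}, and what remains is essentially the observation that radius-$1$ locality confines any putative obstruction to a finite window. The only points requiring a little care are that a preimage of a periodic configuration need not itself be periodic --- which is why the contradiction is obtained for an \emph{arbitrary} preimage in $X$ rather than by trying to localize a periodic preimage --- and the bookkeeping of lining up the indices of the bi-infinite configuration with the finite word in Lemma \ref{nonsurjectivity}. One could in fact bypass Lemma \ref{surjectivity1} altogether, since $\mathbf{z}\in X$ simply has no preimage; routing through $\mathcal{P}$ merely matches the phrasing of the proposition.
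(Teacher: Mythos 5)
Your proposal is correct and follows essentially the same route as the paper: take the $\sigma$-periodic configuration built from the block $100,3,2,100$, use the radius-$1$ locality of $\Gamma$ to reduce the existence of any preimage to the length-$6$ word situation excluded by Lemma \ref{nonsurjectivity}, and conclude via Lemma \ref{surjectivity1}. The extra care you take with the index shift and with the remark that no preimage in all of $X$ exists is just a more explicit spelling-out of what the paper's two-line proof leaves implicit.
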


\begin{proof}
Let $ {\bf x}$ be the periodic configuration ${\bf x} = \overline{100, \, 3, \, 2, \, 100,} \cdot \overline{100, \, 3, \, 2, \, 100}$.  Lemma \ref{nonsurjectivity} implies that there  does not exist $ {\bf y}$ such that $ \Gamma({\bf y}) = {\bf x}$. Lemma \ref{surjectivity1} implies that $\Gamma$ is not surjective.
\end{proof}
\subsection{Surjective subsets}\label{surjective}
While $\Gamma$ is not surjective, in this section we identify a
proper, closed, $ \Gamma$-invariant subspace, $\mathcal G$.  We then
identify sand automata $\Phi$ for which $\mathcal G$ is also
$\Phi$-invariant.  Let
\[{\mathcal G} := \{\textbf{x}: |x_i - x_{i-1}| \geq 2, \, \forall i \}\, .\] 
We shall show that 
 each member of $\mathcal G$  has a $\Gamma$-predecessor (though not
necessarily in $\mathcal G$).  We will  explain in geometric
terms how $\Gamma$ acts on $\mathcal G$, identify subsets of $\mathcal G$ that are a possible attractor for $\Gamma$,  and generalize
these properties to other sand automata.   

 It is clear that
$        f  \circ \Pi ({\mathcal G}) \subset
\{{{2}\choose{2}},{{-2}\choose{2}},{{2}\choose{-2}},{{-2}\choose{-2}}\}^{\mathbb{Z}}$,  and that $f \circ \Pi ({\mathcal G})$ is the 
 subshift of finite type whose  transition graph
$G$ is shown in  Figure \ref{graph1}. Re-label ${{2}\choose{2}} = -1,
{{-2}\choose{2}}=0^-, {{2}\choose{-2}}=0^+, {{-2}\choose{-2}}=1$, and let $Y_{G}$ denote the image of 
$f\circ \Pi(\mathcal G)$ under this labelling. Let
$\{-1,0,1\}^{\mathbb{Z}} $ be the full shift on three letters and let
$p:\{0^-,0^+,1,-1\}\rightarrow \{-1,0,1\} $ by defined by
$p(0^-)=p(0^+)=0,$ $p(1)=1$, $p(-1)=-1$; then p is a radius 0  local rule for the cellular automaton $P:Y_{G}\rightarrow
\{-1,0,1\}^{\mathbb{Z}}$. With this notation, the following lemma is straightforward.
\begin{figure}[h]
\centering
\centerline{\includegraphics[scale=0.2]{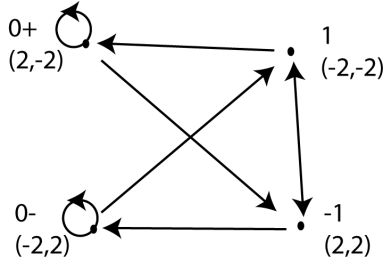}}
\caption{This graph $G$ defines a subshift on its bi-infinite paths.}
\label{graph1}
\end{figure}
\begin{lemma}\label{Gsurjective1}
If ${\bf g} \in {\mathcal G} $ then $\exists {\bf y} \in Y_{G}$ such that 
$\Gamma({\bf g}) = {\bf g} + P({\bf y})$.
\end{lemma}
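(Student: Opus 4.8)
The plan is to unwind the definitions and show that for each coordinate $i$, the value $\Gamma(\mathbf g)_i - g_i$ depends only on the labelled gradient tuple $f\circ\Pi(\mathbf g)_i \in \{0^-,0^+,1,-1\}$, and that this value is exactly $P$ applied to that label. First I would fix $\mathbf g \in \mathcal G$ and set $\mathbf y := f\circ\Pi(\mathbf g)$, viewed in $Y_G$ via the relabelling given just before the statement; since $|g_i - g_{i-1}|\ge 2$ for all $i$, each relativised gradient $\beta_1^{g_i}(g_{i-1})$ and $\beta_1^{g_i}(g_{i+1})$ is $\pm\infty$, so indeed $f\circ\Pi(\mathbf g)$ takes values in $\{\binom{2}{2},\binom{-2}{2},\binom{2}{-2},\binom{-2}{-2}\}^{\mathbb Z}$, and by the finite-type description its image lies in $Y_G$. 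I should also note in passing that $\mathbf y$ is a legitimate element of $Y_G$ and not merely of the full shift, because consecutive gradient tuples of a genuine configuration are constrained — but Lemma only asserts existence of some $\mathbf y\in Y_G$, so it suffices to exhibit this one.

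Next I would compute $\Gamma(\mathbf g)_i$ coordinatewise. By the definition of a linear sand automaton, $\gamma = \gamma_* \circ f \circ \Pi$ with $\gamma_*(x,y) = x\oplus y$, so $\Gamma(\mathbf g)_i = g_i + \gamma_*\big(f(\beta_1^{g_i}(g_{i-1})), f(\beta_1^{g_i}(g_{i+1}))\big)$, where the addition of the $\mathbb Z_5$-output to $g_i\in\mathbb Z$ is interpreted via the sand-automaton convention (the output has magnitude at most $r+1 = 2$, chosen as an integer in $\{-2,\dots,2\}$). I would then go through the four cases: if $f\circ\Pi(\mathbf g)_i = \binom{2}{2}$ (label $-1$), then $\gamma_*(2,2) = 4 = -1$ in $\mathbb Z_5$, so $\Gamma(\mathbf g)_i = g_i - 1 = g_i + p(-1)$; if it is $\binom{-2}{-2}$ (label $1$), then $\gamma_*(-2,-2) = -4 = 1$, giving $g_i + 1 = g_i + p(1)$; and if it is $\binom{2}{-2}$ or $\binom{-2}{2}$ (labels $0^+,0^-$), then $\gamma_*(2,-2) = \gamma_*(-2,2) = 0$, giving $g_i + 0 = g_i + p(0^\pm)$. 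These are exactly the entries of the local rule table in Figure \ref{specificlocalruletable} at the corners $(L,R) \in \{(2,2),(2,-2),(-2,2),(-2,-2)\}$, which read $-1, 0, 0, 1$ respectively, so I can simply cite that table rather than recomputing.

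Finally I would assemble the coordinatewise identities into the global statement: $P(\mathbf y)_i = p(y_i) = \Gamma(\mathbf g)_i - g_i$ for every $i$, hence $\Gamma(\mathbf g) = \mathbf g + P(\mathbf y)$. I do not anticipate a serious obstacle here — the content is entirely bookkeeping — but the one point that needs care is the reconciliation between the abstract "output added to the entry" in the sand-automaton definition and the concrete local rule table: one must be sure that the $\mathbb Z_5$-valued homomorphism output is being lifted to the correct integer representative in $\{-r-1,\dots,r+1\} = \{-2,\dots,2\}$ (so that $4\in\mathbb Z_5$ becomes $-1\in\mathbb Z$, not $4$), which is precisely the convention already fixed by the way Figure \ref{specificlocalruletable} was built from $\gamma_*$. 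I would remark on this explicitly so the reader sees why the four corner entries of that table are exactly $p(-1), p(0^+), p(0^-), p(1)$.
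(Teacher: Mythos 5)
Your proof is correct and matches what the paper intends: the paper states the lemma without proof as ``straightforward,'' and its compressed local rule table (Table \ref{smalladd}) encodes exactly the four corner-case computation $\gamma_*(\pm2,\pm2)\in\{-1,0,0,1\}$ that you carry out, with $\mathbf{y}$ taken to be the relabelled $f\circ\Pi(\mathbf{g})\in Y_G$. Your explicit remark about lifting the $\mathbb{Z}_5$ output to the representative in $\{-2,\dots,2\}$ is a sensible clarification of the convention already implicit in Figure \ref{specificlocalruletable}.
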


\begin{table}[h]
\begin{center}
\begin{tabular}{c|c|c}
$L \backslash R$ &-2&2 \\ \hline
-2&1&0    \\ \hline  
2 & 0&-1 \\ \hline
\end{tabular}
\end{center}
\caption{The local rule table for $\Gamma$ restricted to $\mathcal G$}
\label{smalladd}
\end{table} 

On $\mathcal G$ the local rule table for $\Gamma$ can be compressed to Table \ref{smalladd}. 
  Call
3-tuples such that $f\circ \Pi (a,b,c) = {{-2}\choose{-2}}$
\emph{peaks} (centred at $b$) and similarly $f\circ \Pi
(a,b,c) = {{2}\choose{2}}$ \emph{valleys} (centred at $b$).  Also label
3-tuples such that $f\circ \Pi (a,b,c) = {{-2}\choose{2}}$
\emph{up-slopes} (centred at b) and $f \circ \Pi (a,b,c) =
     {{2}\choose{-2}}$ \emph{down-slopes} (centred at b).  
With this labelling of gradient tuples as {\em geographical features}, the action  of $\Gamma^n$ is easily described, once we have knowledge of how $\Gamma$ acts.

\begin{proposition}\label{Gsurjective2}
If ${\bf g} \in {\mathcal G}$ and ${\bf y} \in Y_{G}$ are  such that $\Gamma({\bf g}) = {\bf g} + {\bf y}$, then $\Gamma^n({\bf g}) = {\bf g} + n{\bf y}$   for
 all $ n $ in $ \mathbb{N}$.
\end{proposition}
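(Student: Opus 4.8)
The plan is to prove this by induction on $n$, with the key point being that the geographical features of $\mathbf{g}$ (peaks, valleys, up-slopes, down-slopes) are preserved by $\Gamma$. First I would observe that the base case $n=1$ is exactly the hypothesis, so the work is in the inductive step: assuming $\Gamma^k(\mathbf{g}) = \mathbf{g} + k\mathbf{y}$, I must show $\Gamma^{k+1}(\mathbf{g}) = \mathbf{g} + (k+1)\mathbf{y}$. The crucial claim to isolate and prove is that $\mathbf{g} + k\mathbf{y} \in \mathcal{G}$ and moreover that $f \circ \Pi(\mathbf{g} + k\mathbf{y}) = f \circ \Pi(\mathbf{g})$ — that is, the gradient-tuple pattern (equivalently the element of $Y_G$) is unchanged by adding the integer multiple $k\mathbf{y}$. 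Granting this, Lemma \ref{Gsurjective1} (or rather the stronger statement that the local rule table of Table \ref{smalladd} depends only on whether each neighbour gradient is $-2$ or $2$, which are the only possibilities in $\mathcal{G}$) gives $\Gamma(\mathbf{g} + k\mathbf{y}) = (\mathbf{g} + k\mathbf{y}) + P(\mathbf{y}') $ where $\mathbf{y}'$ is the $Y_G$-label of $\mathbf{g} + k\mathbf{y}$; but since that label equals the label of $\mathbf{g}$, we get $\mathbf{y}' = \mathbf{y}$ and hence $\Gamma^{k+1}(\mathbf{g}) = \mathbf{g} + k\mathbf{y} + P(\mathbf{y}) = \mathbf{g} + (k+1)\mathbf{y}$, using that $\Gamma(\mathbf{g}) = \mathbf{g} + P(\mathbf{y})$ already identifies $\mathbf{y} = P(\mathbf{y})$ as the increment (I should double-check the bookkeeping here between $\mathbf{y} \in Y_G$ and $P(\mathbf{y}) \in \{-1,0,1\}^{\mathbb{Z}}$).

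The heart of the argument is therefore the geometric/combinatorial claim that adding $k\mathbf{y}$ preserves membership in $\mathcal{G}$ and preserves all gradient tuples. I would argue this locally: at each site $i$, $(\mathbf{y})_i \in \{-1,0,1\}$, and one checks that if $i$ is a peak (so $x_{i-1} + 2 \le x_i$ and $x_{i+1} + 2 \le x_i$ roughly, i.e. both gradients are steeply down), then $(\mathbf{y})_i = 1$ while at a peak the neighbours satisfy $(\mathbf{y})_{i\pm 1} \le 0$, so the gap $|x_i - x_{i\pm 1}|$ only grows or stays $\ge 2$; symmetrically valleys get $(\mathbf{y})_i = -1$ with neighbours $\ge 0$; and on slopes the increments are arranged so that a monotone stretch stays monotone with steps of absolute value $\ge 2$. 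The cleanest way to see this uniformly is to note that $\mathbf{y} = P(\mathbf{y}')$ where $\mathbf{y}'$ is a bi-infinite path in the graph $G$ of Figure \ref{graph1}, and the transition rules of $G$ encode precisely the constraint that makes $\mathbf{g} + \mathbf{y}$ have the same feature sequence as $\mathbf{g}$; then the same path $\mathbf{y}'$ works at every step, giving $\mathbf{g} + k\mathbf{y}$ the same feature sequence for all $k$.

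I expect the main obstacle to be a clean verification that adding $\mathbf{y}$ never changes a gradient tuple — in principle one could imagine a site where the left gradient is exactly $-2$ (value difference exactly $2$) and the increment closes the gap to a difference of $1$, which would both eject the configuration from $\mathcal{G}$ and change the gradient tuple. Ruling this out is where the structure of $Y_G$ must be used: on an up-slope followed by a down-slope (a peak) the relevant increments are $(0,1,0)$-like patterns from the graph, and one checks case by case over the (few) admissible transitions in $G$ — up-slope$\to$up-slope, up-slope$\to$peak, peak$\to$down-slope, down-slope$\to$down-slope, down-slope$\to$valley, valley$\to$up-slope, etc. — that the change in consecutive differences is always $\le 0$ in the direction that would violate the constraint, or more precisely that the sign of $x_i - x_{i-1}$ and the fact that $|x_i - x_{i-1}| \ge 2$ are both maintained. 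Since $\mathbf{y}$ takes values in $\{-1,0,1\}$ and the differences in $\mathbf{g}$ are supported on $\{\ge 2\} \cup \{\le -2\}$, the only dangerous cases are differences equal to exactly $\pm 2$, and there the graph $G$ forces the adjacent increments to cooperate; this finite check completes the induction.
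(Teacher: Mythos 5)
Your proposal is correct and follows essentially the same route as the paper: an induction whose key step is that adding the increment preserves membership in $\mathcal G$ and all gradient tuples, verified by the same local case analysis (a peak's centre gains $1$ while its neighbours gain at most $0$, dually for valleys, and slope centres are fixed while their neighbours move in the favourable direction). The bookkeeping issue you flag between ${\bf y}\in Y_G$ and $P({\bf y})$ is a notational looseness already present in the paper and does not affect the argument.
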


\begin{proof}
Here we claim that all of the geographical
features are preserved under $\Gamma$.  If we show this then the
proposition follows.  First note that according to Table \ref{smalladd},
$|(\Gamma(g))_{n}- g_{n}|\leq 1$ for each $n$, whenever $g\in \mathcal
G$. Let $(g_{-1},g_{0},g_{1})=(a,b,c)$; we show that geographical
features are preserved at $\Gamma(g)_{0}$; the cases at  $\Gamma(g)_{n}$ for $n\neq 0$ are identical.
\begin{enumerate}
\item If $(a,b,c)$
is a valley centred at $b$,  then $(\Gamma(g))_{0}= g_{0}-1$  (see Table
\ref{smalladd}). Since  for $n=\pm 1$, $(\Gamma(g))_{n} -g_{n}$ is at least -1, then the valley at $b$ is mapped to another valley
centred at $(\Gamma(g))_{0}$. The case where $(a,b,c)$ is a peak is
similar.  \item If $(a,b,c)$ is a down-slope centred at
$b$, then we either have a peak or a down-slope centred at a.  Thus
$(\Gamma(g))_{-1}\geq g_{-1}$.
  Similarly there is either a valley
or a down-slope centred at $c$, so $(\Gamma(g))_{1}\leq g_{1}$. This means that 
a down-slope centred at $g_{0}$ is mapped to a down-slope centred at 
$(\Gamma(g))_{0}$. The case where $(a,b,c)$ is an up-slope is similar.
\end{enumerate}
\end{proof}

Note that the proof of Lemma \ref{Gsurjective2} shows that
$\gamma({\mathcal G}) \subset {\mathcal G}$: each geographical feature
under the action of the sand automaton can only become more pronounced
or stay the same.  However $\Gamma:{\mathcal G} \rightarrow {\mathcal
  G}$ is not surjective.  Consider ${\bf g}=\overline{0,3}, \cdot
\overline{0,3}$, so that $\Gamma({\bf g}) = {\bf g} + \overline{-1,1},
\cdot \overline{-1,1}$.  If $\Gamma^{-1}({\bf g}) $ contained  an element ${\bf
  g}^{\prime}$ in ${\mathcal G}$, then $ {\bf g} = {\bf
  g}^{\prime} + \overline{-1,1}, \cdot \overline{-1,1}$ by Lemma
\ref{Gsurjective2}.  So ${\bf g}^{\prime} = \overline{1,2}, \cdot
\overline{1,2} \notin {\mathcal G}$,  a contradiction.  The
next lemma tells us that although $\Gamma$ is not surjective on
$\mathcal G$, all configurations in $P(Y_{{\mathcal G}}) $ are used
when determining $\Gamma({\mathcal G})$.

\begin{lemma}
If ${\bf y} \in P(Y_G)$ then $\exists {\bf g} \in {\mathcal G} $ such that $\Gamma({\bf g}) = {\bf g} + {\bf y}$.
\label{Gsurjective3}
\end{lemma}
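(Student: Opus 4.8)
The plan is to reverse the analysis behind Lemma~\ref{Gsurjective1}. Recall that the proof of that lemma (read off from Table~\ref{smalladd}) shows the sharper statement that for \emph{every} ${\bf g}\in{\mathcal G}$ one has $\Gamma({\bf g})={\bf g}+P(f\circ\Pi({\bf g}))$: the point of $Y_G$ produced there is precisely $f\circ\Pi({\bf g})$, and the value added at coordinate $i$ is $+1$ at a peak, $-1$ at a valley, and $0$ at an up- or down-slope. Hence, to prove Lemma~\ref{Gsurjective3} it suffices, given ${\bf y}\in P(Y_G)$, to construct some ${\bf g}\in{\mathcal G}$ with $P(f\circ\Pi({\bf g}))={\bf y}$.

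Since ${\bf y}\in P(Y_G)$, fix ${\bf z}\in Y_G$ with $P({\bf z})={\bf y}$. Each symbol $z_i\in\{-1,0^-,0^+,1\}$, via the relabelling $\binom{2}{2}=-1$, $\binom{-2}{2}=0^-$, $\binom{2}{-2}=0^+$, $\binom{-2}{-2}=1$, records whether the left and right neighbours of coordinate $i$ sit above or below it; equivalently it determines a pair $(e_i,e_{i+1})\in\{+1,-1\}^2$, where I declare $e_i=+1$ to mean ``the value at $i$ exceeds the value at $i-1$'' (so $L=-2$) and $e_i=-1$ the reverse (so $L=2$), and similarly for $e_{i+1}$ via $R$. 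Because ${\bf z}$ is a bi-infinite path in the graph $G$ of Figure~\ref{graph1}, consecutive symbols $z_i$ and $z_{i+1}$ agree on the overlapping entry $e_{i+1}$, so these pairs patch together to a single well-defined sequence $(e_i)_{i\in\mathbb Z}\in\{+1,-1\}^{\mathbb Z}$.

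Now set $g_0:=0$ and $g_i-g_{i-1}:=2e_i$ for all $i\in\mathbb Z$. Every consecutive difference has absolute value $2$, so ${\bf g}\in{\mathcal G}$. It remains to verify that the relativisation of ${\bf g}$ reproduces ${\bf z}$: for each $i$, from $g_{i-1}=g_i-2e_i$ and $g_{i+1}=g_i+2e_{i+1}$ we get $\beta_1^{g_i}(g_{i-1})=+\infty$ exactly when $e_i=-1$ and $=-\infty$ exactly when $e_i=+1$, and symmetrically for $\beta_1^{g_i}(g_{i+1})$ in terms of $e_{i+1}$; applying $f$ and matching against the four cases of the relabelling gives $(f\circ\Pi({\bf g}))_i=z_i$ for all $i$. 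Therefore $P(f\circ\Pi({\bf g}))=P({\bf z})={\bf y}$, and by the sharpened form of Lemma~\ref{Gsurjective1} we conclude $\Gamma({\bf g})={\bf g}+{\bf y}$, as required.

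There is no genuine obstacle here; the only step needing attention is the consistency claim in the second paragraph — that the left/right slope data read off from a valid $Y_G$-path assemble into one $\{+1,-1\}$-valued sequence — but this is exactly the edge condition defining $G$. (The whole argument can also be compressed to the observation that $f\circ\Pi\colon{\mathcal G}\to Y_G$ is onto, so any ${\bf z}\in Y_G$ with $P({\bf z})={\bf y}$ is of the form $f\circ\Pi({\bf g})$ with ${\bf g}\in{\mathcal G}$; the explicit ${\bf g}$ above just makes this self-contained.)
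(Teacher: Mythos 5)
Your proof is correct and follows essentially the same route as the paper: lift ${\bf y}$ to a $P$-preimage in $Y_G$ and construct a configuration in ${\mathcal G}$ whose relativised gradient sequence realises it, then conclude via the computation underlying Lemma~\ref{Gsurjective1}. The only cosmetic difference is that you fix every consecutive difference to be exactly $\pm 2$ (checking consistency via the edge condition of $G$), whereas the paper builds ${\bf g}$ outward from the central cell allowing any differences of magnitude at least $2$.
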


\begin{proof}
First find an element ${\bf y^*} \in
\{{{2}\choose{2}},{{-2}\choose{2}},{{2}\choose{-2}},{{-2}\choose{-2}}\}^{\mathbb{Z}}$
with $P( {\bf y^*}) ={\bf y}$. Then  choose $g_0$ arbitrarily and
follow the instructions given by ${\bf y^*}$ to specify $g_{-1}$ and
$g_1$ - for example, if $y^*_0 = {{2}\choose{-2}} $ then choose $g_1
\leq g_0 - 2 $ and $g_{-1} \geq y_0 + 2$.  Continue this process, using
the gradients at locations -1 and 1 to specify $g_{-2}$ and $g_{2}$, moving outwards from the central cell.
The result follows by induction.
\end{proof}

The proofs in this section for the sand automaton $\Gamma$
 relied on the preservation of certain
geographical features when restricted to the set of configurations
${\mathcal G}$. 
We now show that knowledge of  the entries $\alpha$, $\beta$, $\delta$ and $\lambda$ noted in Table
\ref{alphabetatable}, is sufficient to extend these results.

\begin{table}[h]
\begin{center}
\begin{tabular}{c|c|c|c|c|c}
$L \backslash R $&$-\infty$&-1&0&1&$\infty$ \\ \hline
$-\infty$&$\alpha$& & & &$\beta$ \\ \hline  
-1&& & & & \\ \hline
0& & & & & \\ \hline
1& & & & & \\ \hline
$\infty$& $\delta$& & & & $\lambda$\\ \hline
\end{tabular} 
\end{center}
\caption{The positions in the local rule table that are used to create peak/valley preserving sand automata.}
\label{alphabetatable}
\end{table}

\begin{theorem}
Let the radius one sand automaton $\Phi$ have local rule table as in Table \ref{alphabetatable}. Then $\Phi$ is peak preserving if and only if $\alpha \geq \beta,\delta,\lambda$ and $\Phi$ is  valley preserving if and only if $\lambda \leq \beta,\delta,\alpha$.
\end{theorem}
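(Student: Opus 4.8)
\medskip
\noindent\textbf{Proof plan.}
Here $\Phi$ is \emph{peak preserving} if for every ${\bf x}\in{\mathcal G}$ and every $i$ with $\Pi({\bf x})_i={{-\infty}\choose{-\infty}}$ (a peak centred at $x_i$) one also has $\Pi(\Phi({\bf x}))_i={{-\infty}\choose{-\infty}}$ (a peak centred at $\Phi({\bf x})_i$), and valley preserving is the same with ${{\infty}\choose{\infty}}$ in place of ${{-\infty}\choose{-\infty}}$. The observation that makes a criterion in $\alpha,\beta,\delta,\lambda$ alone possible is that ${\bf x}\in{\mathcal G}$ exactly when every gradient tuple $\Pi({\bf x})_i$ has both coordinates in $\{-\infty,+\infty\}$, so on ${\mathcal G}$ the rule acts cellwise by $\Phi({\bf x})_i=x_i+\phi(\Pi({\bf x})_i)$, where $\phi(\Pi({\bf x})_i)$ is whichever of the four corner entries $\alpha,\beta,\delta,\lambda$ of Table \ref{alphabetatable} matches the tuple at $i$; no other entry of the rule table is ever used. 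I plan to prove the peak statement directly and then read off the valley statement by applying it to the conjugate automaton $\Phi'({\bf x}):=-\Phi(-{\bf x})$: this $\Phi'$ is again a radius one sand automaton (continuous, shift commuting, vertical commuting, and infiniteness conserving), it is peak preserving precisely when $\Phi$ is valley preserving (the involution $x\mapsto-x$ exchanges peaks with valleys and maps ${\mathcal G}$ onto itself), and its corner entries, in the slots of $\alpha,\beta,\delta,\lambda$, are $-\lambda,-\delta,-\beta,-\alpha$, so the peak criterion applied to $\Phi'$ reads $-\lambda\ge-\delta,-\beta,-\alpha$, i.e.\ $\lambda\le\beta,\delta,\alpha$. (One could instead simply rerun the peak argument with inequalities reversed.)

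\medskip
\noindent The local computation runs as follows. Fix ${\bf x}\in{\mathcal G}$ with a peak at $i$, that is $x_{i-1},x_{i+1}<x_i-1$; if a neighbour equals $-\infty$ that side is immediate since $\Phi$ fixes $\pm\infty$, so take $x_{i-1},x_{i+1}$ finite, hence $x_{i\pm1}\le x_i-2$. Then $\Phi({\bf x})_i=x_i+\alpha$. Because ${\bf x}\in{\mathcal G}$ the tuple $\Pi({\bf x})_{i-1}$ is constrained: its right coordinate $\beta_1^{x_{i-1}}(x_i)$ equals $+\infty$ (as $x_i\ge x_{i-1}+2$) and its left coordinate $\beta_1^{x_{i-1}}(x_{i-2})$ lies in $\{-\infty,+\infty\}$, so $\Pi({\bf x})_{i-1}\in\{{{-\infty}\choose{\infty}},{{\infty}\choose{\infty}}\}$ and $\Phi({\bf x})_{i-1}=x_{i-1}+c_L$ with $c_L\in\{\beta,\lambda\}$; symmetrically $\Phi({\bf x})_{i+1}=x_{i+1}+c_R$ with $c_R\in\{\delta,\lambda\}$. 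Hence the image window is a peak centred at $\Phi({\bf x})_i$ if and only if $x_{i-1}+c_L\le x_i+\alpha-2$ and $x_{i+1}+c_R\le x_i+\alpha-2$. If $\alpha\ge\beta,\delta,\lambda$ then $c_L,c_R\le\alpha$, which together with $x_{i\pm1}\le x_i-2$ yields both inequalities, so $\Phi$ is peak preserving.

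\medskip
\noindent For the converse, suppose $\alpha<\mu$ for some $\mu\in\{\beta,\delta,\lambda\}$; I would exhibit one ${\bf x}\in{\mathcal G}$ whose peak at $0$ is destroyed in a single step. Take the extremal peak $x_0=0$, $x_{-1}=x_1=-2$, and pin a second neighbour so as to realise $\mu$: set $x_{-2}=-4$ (making $\Pi({\bf x})_{-1}={{-\infty}\choose{\infty}}$, $\Phi({\bf x})_{-1}=-2+\beta$), or $x_{-2}=0$ (making $\Pi({\bf x})_{-1}={{\infty}\choose{\infty}}$, $\Phi({\bf x})_{-1}=-2+\lambda$), or $x_2=-4$ (making $\Pi({\bf x})_1={{\infty}\choose{-\infty}}$, $\Phi({\bf x})_1=-2+\delta$), according as $\mu$ is $\beta$, $\lambda$ or $\delta$; fill in the remaining cells so that consecutive differences keep absolute value $\ge2$, which keeps ${\bf x}\in{\mathcal G}$. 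Then $\Phi({\bf x})_0=\alpha$ while the marked neighbour equals $-2+\mu>-2+\alpha=\Phi({\bf x})_0-2$, so it is not $2$ or more below $\Phi({\bf x})_0$ and the image window is not a peak; hence $\Phi$ is not peak preserving. This gives the peak equivalence, and the valley equivalence follows as above.

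\medskip
\noindent I expect the only step needing real care --- the ``hard'' part, such as it is --- to be the constraint on $\Pi({\bf x})_{i\pm1}$ above: one must verify that on ${\mathcal G}$ a neighbour of a peak (and, dually, of a valley) can exhibit only one of the four corner gradient tuples, since otherwise some other local-rule entry could influence preservation and the clean criterion would be false. Checking that the configurations built in the converse extend to bi-infinite members of ${\mathcal G}$ is automatic because ${\mathcal G}$ is cut out by a local condition on consecutive pairs; everything else is elementary integer arithmetic and the $x\mapsto-x$ symmetry. As a check, $\Gamma$ has $\alpha=1$, $\beta=\delta=0$, $\lambda=-1$, so the criterion recovers that $\Gamma$ is both peak and valley preserving, in accordance with Proposition \ref{Gsurjective2}.
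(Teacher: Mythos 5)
Your proof is correct and follows essentially the same route as the paper's: the sufficiency is the same monotonicity argument (a peak's centre gains exactly $\alpha$ while its neighbours, whose gradient tuples on $\mathcal G$ can only be the corner tuples giving $\beta,\lambda$ or $\delta,\lambda$, gain at most $\alpha$), and the necessity uses the same kind of explicit counterexample with an up-slope, valley, or down-slope adjacent to an extremal peak. The only differences are cosmetic refinements: you spell out the neighbour-tuple constraint that the paper compresses into ``$\alpha$ is the largest increase when restricted to $\mathcal G$,'' write out all three counterexample cases explicitly, and obtain the valley statement via the $x\mapsto -x$ conjugation where the paper simply says the valley proof is similar.
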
   

\begin{proof}
Assume that $\Phi$ is the sand automaton described above. We use the notation
 $x_{[n-1,n+1]} = (a,b,c) $ and $(\Phi({\bf x}))_{[n-1,n+1]}= (a^{*},b^{*}, c^{*})$.
We prove the statement concerning peak preservation - the proof of the statement for valleys is similar.

  Suppose that  $\alpha \geq
\beta,\delta,\lambda$.  Let $(a,b,c)$ be a peak centred at b.  Then $b^{*}= b + \alpha$.
 (see Table \ref{alphabetatable}).  Since
$\alpha$ is the largest increase when restricted to $\mathcal G$, then $a^{*}, \,\, c^{*} \leq a+\alpha,\, c+\alpha$.
Thus peaks are mapped
to  peaks.  Conversely suppose that  $\Phi$ is peak preserving, and 
suppose that $\alpha <
\beta$. Then the configuration ${\textbf g} = \ldots-2, \, 0\, \cdot\,  2, \, 0
\ldots$, that has a peak at the central position and an up-slope at
the $-1$-st position, does not map the central peak to another peak.
This is due to the fact that $(\Phi({\textbf x}))_0 = 2 + \alpha$ and
$(\Phi({\textbf x}))_{-1} = 0 + \beta$, where $\alpha < \beta$.
Therefore $(\Phi({\textbf x}))_0 - (\Phi({\textbf x}))_{-1} > -2$.  Similar
configurations can be constructed when $\alpha < \delta$ or $\alpha <
\lambda$ using a valley or a down-slope at the $1$
position.

\end{proof}

 For the results of  Lemma \ref{Gsurjective2} to hold, which would also imply that 
$\mathcal G$ is $\Phi$ invariant,
it must be both peak and valley preserving as well as 
 being up-slope and down-slope preserving. 
This implies that we are assuming $\alpha \geq \beta, \delta \geq \lambda$. The next lemma demonstrates that  peak and valley preservation implies up-slope/ down-slope preservation.
\begin{proposition}
Let $\Phi$ be a radius sand automaton with local rule table as in Table 
\ref{alphabetatable}.  If $\Phi$ is both peak and valley preserving
then it is also up-slope and down-slope preserving,  so that  $\mathcal
G$ is $\Phi$ invariant and not surjective.
\label{invariantgeneral}
\end{proposition}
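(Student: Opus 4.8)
The plan is to derive up-slope and down-slope preservation from the inequalities that peak- and valley-preservation already supply, and then to read off $\Phi$-invariance and non-surjectivity of $\mathcal G$ exactly as was done for $\Gamma$. First I would record the inequalities to be used: by the theorem above, $\Phi$ peak preserving gives $\alpha\ge\beta$ and $\alpha\ge\delta$, while $\Phi$ valley preserving gives $\lambda\le\beta$ and $\lambda\le\delta$, so that $\lambda\le\beta\le\alpha$ and $\lambda\le\delta\le\alpha$. The second ingredient, already implicit in the proofs of that theorem and of Lemma \ref{Gsurjective2}, is that on $\mathcal G$ only the four corner entries of the rule table are ever evaluated: if ${\bf g}\in\mathcal G$ then consecutive heights differ by at least $2$, so relativising a $3$-tuple $g_{[n-1,n+1]}$ never produces a finite gradient; hence every such $3$-tuple is a peak, valley, up-slope or down-slope, and $(\Phi{\bf g})_n-g_n$ equals $\alpha$, $\lambda$, $\beta$ or $\delta$ accordingly. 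So proving slope-preservation reduces to identifying, for a slope at a position $n$ of some ${\bf g}\in\mathcal G$, which entry is added at $n$ and which at the neighbours $n\pm1$, and then checking the two gap inequalities.

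For up-slopes, write $g_{[n-1,n+1]}=(a,b,c)$ with $a\le b-2$ and $b\le c-2$, so $(\Phi{\bf g})_n=b+\beta$. The $3$-tuple centred at $n-1$ has right neighbour $b\ge a+2$, hence right gradient $\infty$, hence is a valley or an up-slope; thus $(\Phi{\bf g})_{n-1}\in\{a+\lambda,\ a+\beta\}$, so $(\Phi{\bf g})_{n-1}\le a+\beta$ since $\lambda\le\beta$. Dually the $3$-tuple centred at $n+1$ has left neighbour $b\le c-2$, hence left gradient $-\infty$, hence is a peak or an up-slope; thus $(\Phi{\bf g})_{n+1}\in\{c+\alpha,\ c+\beta\}$, so $(\Phi{\bf g})_{n+1}\ge c+\beta$ since $\alpha\ge\beta$. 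Hence $(\Phi{\bf g})_n-(\Phi{\bf g})_{n-1}\ge b-a\ge2$ and $(\Phi{\bf g})_{n+1}-(\Phi{\bf g})_n\ge c-b\ge2$, i.e. $(\Phi{\bf g})_{[n-1,n+1]}$ is again an up-slope. The down-slope case is the mirror image: now $(\Phi{\bf g})_n=b+\delta$; the $3$-tuple at $n-1$ is a peak or a down-slope, so $(\Phi{\bf g})_{n-1}\ge a+\delta$ (using $\alpha\ge\delta$), and the $3$-tuple at $n+1$ is a valley or a down-slope, so $(\Phi{\bf g})_{n+1}\le c+\delta$ (using $\lambda\le\delta$), and the same arithmetic forces both gaps to be $\ge2$.

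With all four features now preserved, the remaining conclusions are immediate. Every $3$-tuple of a configuration in $\mathcal G$ is a peak, valley, up-slope or down-slope, each of which $\Phi$ sends to a $3$-tuple of the same type, so every $3$-tuple of $\Phi({\bf g})$ still has consecutive gaps $\ge2$; hence $\Phi(\mathcal G)\subseteq\mathcal G$. Since the features are preserved cell by cell, the proof of Proposition \ref{Gsurjective2} transfers verbatim, so $\Phi^n({\bf g})={\bf g}+n{\bf y}$ on $\mathcal G$ for the associated feature vector ${\bf y}$; and non-surjectivity of $\Phi\colon\mathcal G\rightarrow\mathcal G$ then follows just as for $\Gamma$ in the discussion preceding Lemma \ref{Gsurjective3}, via the periodic configuration $\overline{0,3}\in\mathcal G$. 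Indeed, from $\lambda\le\beta\le\alpha$, $\lambda\le\delta\le\alpha$ and the boundedness of the rule-table entries, any $\mathcal G$-preimage of $\overline{0,3}$ must share its up/down edge pattern, hence its feature pattern, and is therefore forced to be $\overline{-\lambda,\ 3-\alpha}$, which does not lie in $\mathcal G$; the sole exception $\alpha=\beta=\delta=\lambda$, in which $\Phi$ acts on $\mathcal G$ as a vertical translation, does not occur when the rule table is a latin square (the entries in a given row or column are then distinct, so $\alpha>\beta>\lambda$).

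The one step I expect to demand care is the bookkeeping in the second paragraph: one must check that fixing a single gradient at the edge between $n$ and one of its neighbours already restricts that neighbour's $3$-tuple to exactly two feature types, and that the inequality then needed is precisely one of the four furnished by peak- or valley-preservation. There is nothing else to verify, but it is easy to mislabel which of $\alpha,\beta,\delta,\lambda$ appears at which position, or to forget that the whole discussion takes place inside $\mathcal G$ (so that the neighbouring windows really are features).
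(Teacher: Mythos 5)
Your first two paragraphs (the reduction to the four corner entries and the two mirror slope arguments) are correct and are essentially the paper's own proof of this proposition: the paper likewise fixes a slope at $b$, observes that the neighbouring windows can only be a peak/down-slope on one side and a valley/down-slope on the other, and uses $\lambda\le\beta,\delta\le\alpha$ to force both gaps to stay $\ge 2$. (Your bookkeeping is in fact the consistent one: with Table \ref{alphabetatable} a down-slope adds $\delta$ and an up-slope adds $\beta$, whereas the paper's proof writes $\beta$ for the down-slope case; by the symmetry of the hypotheses this is harmless.) The deduction of $\Phi(\mathcal G)\subseteq\mathcal G$ from feature preservation is also fine, and matches the remark after Proposition \ref{Gsurjective2}.

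The non-surjectivity part of your third paragraph, however, contains a genuine error. The forced $\mathcal G$-preimage of $\overline{0,3}$ is indeed $\overline{-\lambda,\,3-\alpha}$, but your claim that it never lies in $\mathcal G$ is false: its gap is $3-(\alpha-\lambda)$, so it fails to lie in $\mathcal G$ only when $\alpha-\lambda\ge 2$. For example, with $\alpha=1$, $\beta=\delta=\lambda=0$ the automaton is peak and valley preserving, the corner entries are not all equal, and $\overline{0,2}\in\mathcal G$ maps to $\overline{0,3}$, so your witness gives nothing; your claimed dichotomy (``sole exception $\alpha=\beta=\delta=\lambda$'') is therefore wrong, and the latin-square hypothesis you invoke to rescue it is not part of the statement. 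The repair is to change the witness: take $\overline{0,2}\in\mathcal G$ instead, whose forced preimage $\overline{-\lambda,\,2-\alpha}$ has gap $2-(\alpha-\lambda)<2$ whenever $\alpha>\lambda$, i.e.\ whenever the four corner entries are not all equal. (When $\alpha=\beta=\delta=\lambda$ the restriction of $\Phi$ to $\mathcal G$ is a vertical translation and \emph{is} surjective, so the ``not surjective'' clause of the proposition genuinely needs this nondegeneracy; note that the paper's own proof stops at slope preservation and never argues non-surjectivity beyond the earlier $\Gamma$-specific example, so you were right to sense something needed saying here, but the argument you supplied does not close it.)
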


\begin{proof}
Let $\Phi$ be both valley and peak preserving. Then $\alpha \geq \beta, \delta \geq \lambda$. 
We show that down-slopes are mapped to down-slopes, the up-slope case being similar.
Suppose there is a down-slope centred at  $x_{n}=b$, where 
$x_{[n-1,n+1]} = (a,b,c)$.  
Then $(\Phi(x))_{n} = b +\beta$. We either have a peak or a down-slope
centred at $a$.  Thus$(\Phi(x))_{n-1} \geq  a +\beta$.
Similarly there is either a valley or a down-slope centred at c.
Therefore $(\Phi(x))_{n+1} \leq  c +\beta$.  Thus a down-slope centred
at $b$ is mapped to a down-slope centred at the image of $b$.
\end{proof}

 If we count
the number of sand automata that are both peak and valley preserving
by fixing both $\alpha$ and $\lambda$ and then counting all 
$\delta, \beta$ that do not violate the conditions $\alpha \geq
\beta,\delta \geq \lambda $, then  in total there are $ 105$ radius one peak and valley preserving automata.
There are in total $5^4=625$ possible choices for
$\alpha,\beta,\delta,\lambda$; thus  the set of sand automata
which is invariant on the set $\mathcal G$ represents $\frac{105}{625}
= 0.168$ of the total radius 1 sand automata.


\section{Equicontinuity and points of equicontinuity}\label{equicontinuity}
In this section we investigate the equicontinuity of radius one sand automata.
 In
\cite{cfm} and \cite{dgm}, the authors classify one dimensional sand automata as: either sensitive, or  nonsensitive without an
equicontinuity point, or non-equicontinuous with an equicontinuity
point, or finally equicontinuous. 

\subsection{Vertical inducing points and equicontinuity}\label{vertical_inducing}

    Given that
a sand automaton is topologically conjugate to a 2-dimensional
cellular automaton, we use the
following result:

\begin{theorem}[Proposition 3.14, \cite{dgm}]
$\Phi $ is equicontinuous if and only if  $\Phi$ is ultimately periodic, if and only if 
$ \forall {\bf x} \in X,$ $ \Phi({\bf x})$ is eventually periodic.
\label{equicontinuous1}
\end{theorem}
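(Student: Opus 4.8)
The plan is to pass to the cellular-automaton model and then argue on a compact space. By Lemma~\ref{sand=cellular}, $\Phi$ is topologically conjugate to the cellular automaton $\Phi_{{{1}\choose{0}}}$ acting on the compact subshift of finite type $S_{{{1}\choose{0}}}\subset\{0,1\}^{\mathbb Z^{2}}$, and equicontinuity, ultimate periodicity, and the property that every orbit is eventually periodic are all conjugacy invariants; so I would prove the chain of equivalences for $F:=\Phi_{{{1}\choose{0}}}$ on $Y:=S_{{{1}\choose{0}}}$. Moreover the statement that $\Phi$ is ultimately periodic and the statement that $\Phi(\mathbf x)$ is eventually periodic for every $\mathbf x$ coincide, since $\mathbf x$ is eventually periodic precisely when $\Phi(\mathbf x)$ is; so there are really two conditions, and it is enough to show $F$ equicontinuous $\Rightarrow$ $F$ ultimately periodic $\Rightarrow$ $F$ equicontinuous.

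For the first implication: on a compact metric space a family of continuous maps that is equicontinuous at every point is uniformly equicontinuous, so, applying this to $\{F^{n}:n\ge0\}$ and translating into the cylinder metric, there is a radius $R$ such that for \emph{all} $n\ge0$ the value $(F^{n}\mathbf x)_{\mathbf 0}$ depends only on the restriction of $\mathbf x$ to the box $B_{R}$ of radius $R$ about the origin. Since $F^{n}$ commutes with both shifts, each $F^{n}$ then has a single local rule $g_{n}\colon Y|_{B_{R}}\to\{0,1\}$ (i.e.\ $(F^{n}\mathbf x)_{\mathbf m}=g_{n}(\mathbf x|_{\mathbf m+B_{R}})$ for every $\mathbf m$), and $g_{n}$ determines $F^{n}$. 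There are only finitely many maps from the finite set $Y|_{B_{R}}$ to $\{0,1\}$, so $(g_{n})_{n\ge0}$ is eventually periodic: $g_{n+p}=g_{n}$, hence $F^{n+p}=F^{n}$, for some $q\ge0$, $p\ge1$ and all $n\ge q$. In particular every point has preperiod $\le q$ and period dividing $p$, so $F$ is ultimately periodic.

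For the converse I would use Baire category to upgrade the pointwise hypothesis to the uniform statement $F^{q_{0}+p_{0}}=F^{q_{0}}$. For $q\ge0$, $p\ge1$ put $E_{q,p}=\{\mathbf x\in Y:F^{q+p}\mathbf x=F^{q}\mathbf x\}$; each is closed ($F$ continuous), each is shift-invariant (shifts commute with $F$), and $Y=\bigcup_{q,p}E_{q,p}$ by hypothesis. Since $Y$ is compact metric, hence Baire, some $E_{q_{0},p_{0}}$ has nonempty interior, so contains a nonempty cylinder $Y\cap[w]$ with $w$ a pattern on a finite box, and, being shift-invariant, contains every translate of it. I claim $F^{q_{0}+p_{0}}=F^{q_{0}}$ on all of $Y$: given $\mathbf y\in Y$ and a site $\mathbf m$, pick $R'$ with $(F^{q_{0}}\cdot)_{\mathbf m}$ and $(F^{q_{0}+p_{0}}\cdot)_{\mathbf m}$ factoring through the box of radius $R'$ about $\mathbf m$ (each iterate of a cellular automaton has finite radius), and build $\mathbf z\in Y$ agreeing with $\mathbf y$ on that box and carrying a copy of $w$ far enough away to be disjoint from it; then $\mathbf z$ lies in a translate of $Y\cap[w]$, so $(F^{q_{0}+p_{0}}\mathbf y)_{\mathbf m}=(F^{q_{0}+p_{0}}\mathbf z)_{\mathbf m}=(F^{q_{0}}\mathbf z)_{\mathbf m}=(F^{q_{0}}\mathbf y)_{\mathbf m}$. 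Such a $\mathbf z$ exists because $Y=e(X)$ has the single forbidden pattern of a $1$ directly above a $0$ in one column, a constraint internal to each column with no interaction between columns: one builds $\mathbf z$ column by column, using the columns of $\mathbf y$ near $\mathbf m$, the (translated) columns of any fixed element of $Y\cap[w]$ where the copy of $w$ sits, and the constantly-$0$ column elsewhere. Finally, $F^{q_{0}+p_{0}}=F^{q_{0}}$ gives equicontinuity: for $n\ge q_{0}$ the map $F^{n}$ lies in the finite set $\{F^{q_{0}},\dots,F^{q_{0}+p_{0}-1}\}$, while $F^{0},\dots,F^{q_{0}-1}$ are finitely many continuous maps of a compact space, so $\{F^{n}:n\ge0\}$ is uniformly equicontinuous, which is equicontinuity of $F$.

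The hard part will be the gluing step: one has to be certain that the cylinder delivered by Baire category sees enough of a configuration to force $F^{q_{0}+p_{0}}=F^{q_{0}}$ globally, which comes down to showing that far-apart finite patterns can always be completed to a point of $S_{{{1}\choose{0}}}$. Because the forbidden pattern is purely vertical and two cells tall, this is short, with only mild care needed around columns that are constantly $0$ or constantly $1$ (the images under $e$ of $\pm\infty$); it is, though, the one place where the concrete structure of $S_{{{1}\choose{0}}}$ rather than soft topology enters, and also the reason it pays to pass to $S_{{{1}\choose{0}}}$ at the start, since that is what makes the space compact and hence Baire (cf.\ \cite{dgm}).
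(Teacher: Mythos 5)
The paper itself gives no argument for this statement: it is quoted from \cite{dgm} (Proposition 3.14 there), so there is no in-house proof to compare against, and your proposal should be judged as a reconstruction of the cited result. Judged that way, it is correct. Your reduction to $F=\Phi_{{{1}\choose{0}}}$ on the compact $S_{{{1}\choose{0}}}$ via Lemma \ref{sand=cellular} is legitimate (all three properties transfer under the conjugacy $e$, and compactness is what makes both halves run), and your collapsing of the last two conditions is right, since $\mathbf{x}$ is eventually periodic exactly when $\Phi(\mathbf{x})$ is. In the forward direction, pointwise equicontinuity of $\{F^{n}\}$ on a compact space upgrades to uniform equicontinuity, giving one radius $R$ valid for all $n$; shift-commutation then gives each $F^{n}$ a local rule on the finite pattern set $Y|_{B_R}$, finiteness gives a repetition $F^{q}=F^{q+p}$, and composing with $F$ propagates this to $F^{n}=F^{n+p}$ for all $n\ge q$, hence ultimate periodicity. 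In the converse, the Baire argument is sound, and you correctly isolate the only non-soft step, the gluing: it works exactly as you say because the unique forbidden pattern of $S_{{{1}\choose{0}}}$ is vertical, so $S_{{{1}\choose{0}}}$ is precisely the full product, over columns, of the set of legal columns (all ones, all zeros, or ones up to some height); choosing a purely horizontal translate of $w$ keeps its columns disjoint from those of the box about $\mathbf m$, and invariance of $E_{q_{0},p_{0}}$ under both shifts (again Lemma \ref{sand=cellular}) puts the glued point in $E_{q_{0},p_{0}}$, forcing $F^{q_{0}+p_{0}}=F^{q_{0}}$ globally and hence equicontinuity of the finite family of iterates. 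What your route buys is a self-contained proof within this paper's one-dimensional, radius-one setting; what the citation buys is generality, since \cite{dgm} treats sand automata in arbitrary dimension — though your gluing argument generalizes with no change, because the column-product structure of $e(X)$ persists for every $d$.
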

In order to classify $\Gamma$ we introduce the following definitions.
Let $n \in \mathbb{N}$.  A configuration ${\bf x}$ is a \emph{vertical
  inducing point} of order n for a sand automaton $\Phi $ if
$\Phi({\bf x}) = \rho^n({\bf x})$.
 For example, a fixed
point for $\Phi$ is also a vertical inducing point of order 0.
\begin{lemma}
If ${\bf x}$ is a vertical inducing point of order n, then $\Phi^m({\bf x}) = \rho^{mn}({\bf x}),$ for each  $m \in \mathbb{N}$. Also
 $e(x)\in  S_{{{1}\choose{0}}}$ satisfies  $\Phi_{{{1}\choose{0}}}^m({\bf x}) = \sigma_V^{mn}({\bf x}), \,$ for each $ m \in \mathbb{N}$.

\label{VIP1}
\end{lemma}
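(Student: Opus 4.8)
The plan is to prove both statements by a straightforward induction on $m$, using the hypothesis $\Phi({\bf x}) = \rho^{n}({\bf x})$ together with the vertical-commuting property of sand automata (recorded in Theorem~\ref{equicontinuous1}'s surrounding discussion, i.e. that every sand automaton satisfies $\Phi \circ \rho = \rho \circ \Phi$). First I would note that the map $\rho$ commutes with $\Phi$, hence $\Phi^{j}$ commutes with $\rho^{k}$ for all $j,k \in \mathbb{N}$; this is the only structural fact needed.

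For the first statement, the base case $m=0$ (or $m=1$) is immediate: $\Phi^{1}({\bf x}) = \rho^{n}({\bf x})$ is the hypothesis. For the inductive step, assuming $\Phi^{m}({\bf x}) = \rho^{mn}({\bf x})$, I would compute
\[
\Phi^{m+1}({\bf x}) = \Phi\bigl(\Phi^{m}({\bf x})\bigr) = \Phi\bigl(\rho^{mn}({\bf x})\bigr) = \rho^{mn}\bigl(\Phi({\bf x})\bigr) = \rho^{mn}\bigl(\rho^{n}({\bf x})\bigr) = \rho^{(m+1)n}({\bf x}),
\]
where the third equality uses that $\Phi$ commutes with $\rho^{mn}$ and the fourth uses the hypothesis again. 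This closes the induction.

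For the second statement, I would transport everything through the conjugacy $e$ of Lemma~\ref{sand=cellular}. Recall $\Phi_{{{1}\choose{0}}} = e \circ \Phi \circ e^{-1}$, so $\Phi_{{{1}\choose{0}}}^{m} = e \circ \Phi^{m} \circ e^{-1}$. The key observation is that the vertical map $\rho$ on $X$ corresponds under $e$ to the vertical shift $\sigma_{V}$ on $S_{{{1}\choose{0}}}$ — one checks directly from the definition of $e$ that $e \circ \rho = \sigma_{V} \circ e$ (raising every finite column by one grain shifts the $0/1$ pattern down by one row, with $\pm\infty$ columns, being constant, unaffected; I would include this one-line verification). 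Then applying $e$ to the first statement: writing ${\bf y} = e({\bf x}) \in S_{{{1}\choose{0}}}$,
\[
\Phi_{{{1}\choose{0}}}^{m}({\bf y}) = e\bigl(\Phi^{m}({\bf x})\bigr) = e\bigl(\rho^{mn}({\bf x})\bigr) = \sigma_{V}^{mn}\bigl(e({\bf x})\bigr) = \sigma_{V}^{mn}({\bf y}),
\]
which is the claim (modulo the paper's mild abuse of writing ${\bf x}$ for both the configuration and its image $e({\bf x})$).

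There is no real obstacle here — the statement is essentially a bookkeeping consequence of vertical-commutativity plus the conjugacy. The only point requiring a moment's care is the compatibility $e \circ \rho = \sigma_{V} \circ e$, and checking that it behaves correctly on columns equal to $\pm\infty$; I would state this explicitly rather than leaving it implicit, since the sign convention in the definition of $e$ (ones for $j \le x_{i}$) determines whether $\rho$ corresponds to $\sigma_{V}$ or its inverse.
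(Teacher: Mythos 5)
Your proof is correct and takes essentially the same route as the paper: induction on $m$ using the vertical-commuting property of sand automata, then transporting the identity through the conjugacy $e$ of Lemma~\ref{sand=cellular}. Your explicit check that $e\circ\rho$ corresponds to the vertical shift (including the sign-convention caveat) is a detail the paper leaves implicit, but the argument is otherwise identical.
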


\begin{proof}
If $m = 1$, then $ \Phi^1({\bf x}) = \rho^n({\bf x})$ by definition.
Suppose that for $m=k$, $\Phi^k({\bf x}) = \rho^{kn}({\bf x})$.  
Then \[\Phi^{k+1}({\bf x}) = \Phi(\Phi^k({\bf x})) = \Phi(\rho^{kn}({\bf
  x})) = \rho^{kn}(\Phi({\bf x})) = \rho^{kn}(\rho^n({\bf x})) =
\rho^{(k+1)n}({\bf x}).\]
Also, for each $m$, $\Phi_{{{1}\choose{0}}}^{m}(e({\bf x})) = e(\Phi^{m}({\bf x}))= e(\rho^{nm}({\bf x})) = \sigma_V^{mn}({\bf x})\,$. 
\end{proof}

\begin{corollary}
\label{vertical_implies_not_equicontinuous}
If a sand automaton $\Phi$ admits a  vertical inducing point of nonzero order, then $\Phi_{{{1}\choose{0}}}$ is not equicontinuous. 

\end{corollary}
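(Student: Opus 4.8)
The plan is to combine the iteration formula for vertical inducing points (Lemma~\ref{VIP1}) with the characterization of equicontinuity from Theorem~\ref{equicontinuous1}. Suppose $\Phi$ admits a vertical inducing point ${\bf x}$ of nonzero order $n$. By Lemma~\ref{VIP1}, the conjugate cellular automaton satisfies $\Phi_{{{1}\choose{0}}}^m(e({\bf x})) = \sigma_V^{mn}(e({\bf x}))$ for every $m \in \mathbb{N}$. I would first argue that the orbit of $e({\bf x})$ under $\Phi_{{{1}\choose{0}}}$ is \emph{not} eventually periodic. Indeed, if it were, there would be $p$ and $q>0$ with $\sigma_V^{(p+q)n}(e({\bf x})) = \sigma_V^{pn}(e({\bf x}))$, hence $\sigma_V^{qn}(e({\bf x})) = e({\bf x})$, i.e.\ $e({\bf x})$ is vertically $qn$-periodic with $qn \neq 0$. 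Translating back through $e$, this forces ${\bf x}$ to be $\rho^{qn}$-invariant; but $\rho^{qn}({\bf x})_i = x_i + qn$ whenever $x_i$ is finite, so ${\bf x}$ would have to consist entirely of sources and sinks, i.e.\ $x_i \in \{+\infty,-\infty\}$ for all $i$. For such a configuration $\Phi({\bf x}) = {\bf x} = \rho^n({\bf x})$ holds trivially for every $n$, so a genuine (non-degenerate) vertical inducing point must have at least one finite entry — I should either impose this mild non-triviality in the statement or simply observe that the all-infinite configurations are fixed points and so the interesting content of the corollary concerns ${\bf x}$ with a finite coordinate.

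The cleaner route, which avoids this case distinction, is to apply Theorem~\ref{equicontinuous1} directly to $\Phi$ itself rather than to $\Phi_{{{1}\choose{0}}}$. If $\Phi$ were equicontinuous, then by Theorem~\ref{equicontinuous1} every point would be eventually periodic under $\Phi$; in particular $\Phi^{p+q}({\bf x}) = \Phi^{p}({\bf x})$ for some $p\geq 0$, $q>0$. By Lemma~\ref{VIP1}, $\Phi^m({\bf x}) = \rho^{mn}({\bf x})$ for all $m$, so $\rho^{(p+q)n}({\bf x}) = \rho^{pn}({\bf x})$, which after cancelling $\rho^{pn}$ (the vertical map is injective on finite coordinates and fixes infinite ones) gives $\rho^{qn}({\bf x}) = {\bf x}$ with $qn > 0$. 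As above, this is impossible unless every coordinate of ${\bf x}$ is $\pm\infty$. So for any vertical inducing point ${\bf x}$ of nonzero order possessing a finite coordinate, $\Phi$ is not equicontinuous, and hence, by the topological conjugacy $(X,\Phi) \cong (S_{{{1}\choose{0}}}, \Phi_{{{1}\choose{0}}})$ from Lemma~\ref{sand=cellular} together with the fact that equicontinuity is a topological invariant, $\Phi_{{{1}\choose{0}}}$ is not equicontinuous either.

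The only real obstacle is the degenerate case where ${\bf x}$ has no finite coordinate: then ${\bf x}$ is genuinely fixed and is simultaneously a vertical inducing point of \emph{every} order, so it carries no information. I expect the intended reading of the corollary is that $\Phi$ admits a vertical inducing point of nonzero order which is not one of these trivial all-infinite configurations — equivalently, one on which $\rho$ acts nontrivially. With that understood, the argument is a short chain: Lemma~\ref{VIP1} converts $\Phi$-iteration into $\rho$-iteration; eventual periodicity of $\Phi$ would force a nontrivial $\rho$-periodicity of ${\bf x}$; no configuration with a finite coordinate is $\rho^k$-periodic for $k\neq 0$; hence $\Phi$ is not ultimately periodic, so by Theorem~\ref{equicontinuous1} not equicontinuous, so by conjugacy neither is $\Phi_{{{1}\choose{0}}}$. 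In the sequel this corollary is applied to $\Gamma$, which is shown to have (many) vertical inducing points of nonzero order, all of which have finite coordinates, so the hypothesis is met there without further comment.
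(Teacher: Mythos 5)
Your proof is correct and follows essentially the same route as the paper: Lemma~\ref{VIP1} converts the orbit into $\sigma_V$ (equivalently $\rho$) iterates, eventual periodicity would force every coordinate of ${\bf x}$ to be $\pm\infty$, and Theorem~\ref{equicontinuous1} together with the conjugacy of Lemma~\ref{sand=cellular} and the topological invariance of equicontinuity completes the argument. Your explicit treatment of the degenerate all-$\pm\infty$ case is in fact slightly more careful than the paper's, which dismisses it by remarking that such a configuration ``would be vertical inducing of order $0$.''
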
 

\begin{proof}
Let $n \in \mathbb{N}$.  By Lemma \ref{VIP1} if \textbf{x} is a vertical inducing point of
order $n\neq 0$ then $\Phi_{{{1}\choose{0}}}^m(e(\textbf{x})) =
\sigma_V^{mn}(\textbf{x}), \, \forall m \in \mathbb{N}$.  The only way that 
$e(\textbf{x}) $ is eventually periodic is if ${\bf x}$ consists entirely of sinks  or sources; however in this case  ${\bf x}$ would be vertical inducing of order $0$. 
Theorem \ref{VIP1} now implies the result; the fact that equicontinuity is a topological property implies the second statement.
\end{proof}

In the next theorem we identify a class of
sand automata that have vertical inducing points.

\begin{theorem}
\label{nonzerovalue}
Let $\Phi$ be a radius one sand automaton. If there exists a nonzero $m$ such that $m$ appears in each row, or each column, of $\Phi$'s local rule table, then $\Phi$ admits a vertical inducing point of nonzero order.
\end{theorem}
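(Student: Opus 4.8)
The plan is to construct, for a given sand automaton $\Phi$ and a nonzero value $m$ that occurs in every column of the local rule table (the "every row" case is symmetric after swapping the roles of $L$ and $R$), an explicit configuration ${\bf x}$ with $\Phi({\bf x}) = \rho^{m}({\bf x})$, or more precisely with $(\Phi({\bf x}))_i = x_i + m$ for all $i$ — which, since we will arrange $|x_i| < \infty$ everywhere, is exactly the statement that ${\bf x}$ is a vertical inducing point of order $m$ (and $m \neq 0$ by hypothesis). The key observation is that because $m$ appears in every column, for each possible right-gradient value $R \in \widetilde{\mathcal A}_1$ there is at least one left-gradient value $L = L(R)$ with $\phi(L,R) = m$. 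The idea is to build ${\bf x}$ inductively from left to right: having chosen $x_{i-1}$ and $x_i$, the pair $(x_{i-1},x_i)$ determines the right-gradient $R_i = \beta_1^{x_i}(x_{i-1})$ that cell $i$ "sees" on its left; then we pick $x_{i+1}$ so that the left-gradient $\beta_1^{x_i}(x_{i+1})$ seen by cell $i$ equals a value $L$ with $\phi(L, R_i) = m$. This forces $(\Phi({\bf x}))_i = x_i + m$.

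The one subtlety is consistency: the choice of $x_{i+1}$ must simultaneously serve two constraints — it must give cell $i$ the correct left-gradient (so that $\phi$ evaluates to $m$ there), and the pair $(x_i, x_{i+1})$ must itself be a legitimate "state" from which we can continue, i.e. the right-gradient $R_{i+1} = \beta_1^{x_{i+1}}(x_i)$ must again be one of the values for which a good $L$ exists — but that is automatic, since $m$ appears in every column, so *every* value of $R$ is usable. So the only real work is checking that the relativised gradients can be realized by actual integer heights. Here one uses that $\beta_1^{x_i}$ takes the value $\infty$ for any left/right neighbor at height $\geq x_i + 2$, the value $-\infty$ for height $\leq x_i - 2$, and the values $-1,0,1$ for heights $x_i - 1, x_i, x_i + 1$; so whatever gradient value $L \in \{-\infty,-1,0,1,\infty\}$ we need, we can pick $x_{i+1}$ accordingly (say $x_{i+1} = x_i + 100$ if we need $L = \infty$, $x_{i+1} = x_i + L$ if $L$ is finite, $x_{i+1} = x_i - 100$ if we need $L = -\infty$). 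Running this construction over all of $\mathbb Z$ (starting from $x_0 = 0$ and extending in both directions — the leftward extension is the mirror image of the same argument, using that $m$ appears in every column to pick $x_{i-1}$ realizing the needed right-gradient for cell $i$) produces the desired ${\bf x} \in {\mathcal A}^{\mathbb Z}$ with $|x_i| < \infty$ for all $i$.

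Finally, to conclude: by construction $(\Phi({\bf x}))_i = x_i + \phi(L_i, R_i) = x_i + m$ for every $i$, and since all $x_i$ are finite, $\rho^m({\bf x})_i = x_i + m$ as well; hence $\Phi({\bf x}) = \rho^m({\bf x})$, so ${\bf x}$ is a vertical inducing point of order $m \neq 0$. The main obstacle, such as it is, is purely bookkeeping: being careful that the gradient values actually achievable by the measuring tool $\beta_1$ match the column/row indexing of the local rule table, and organizing the two-sided induction cleanly (one can, if desired, first do the construction for $\sigma$-periodic ${\bf x}$ by finding a cycle in the obvious "gradient transition" graph on the set of usable right-gradients, which is nonempty precisely because $m$ lies in every column, and this also gives a slightly cleaner write-up). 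I would present the left-to-right inductive construction as the main argument and remark that periodicity can be arranged.
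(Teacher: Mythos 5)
There is a genuine gap in your main argument: the two-sided extension from $x_0$ secretly uses \emph{both} halves of the hypothesis, while the theorem grants only one. Recall the paper's conventions: at cell $i$ the local rule is evaluated as $\phi(L_i,R_i)$ with $L_i=\beta_1^{x_i}(x_{i-1})$ (rows of the table) and $R_i=\beta_1^{x_i}(x_{i+1})$ (columns). Extending to the right, the already-built pair $(x_{i-1},x_i)$ fixes the row index $L_i$, and you need a column $R_i$ in that row carrying $m$ — i.e.\ ``$m$ in every row''. Extending to the left, the pair $(x_i,x_{i+1})$ fixes the column index $R_i$ (indeed it is forced: $R_i=-L_{i+1}$), and you need a row in that column carrying $m$ — i.e.\ ``$m$ in every column''. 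Your write-up asserts that both directions follow from the single ``every column'' assumption; that is a conflation of the two arguments of $\phi$, and under a single hypothesis one of the two half-lines cannot be completed by this method. (There is also a secondary bookkeeping slip: you call $\beta_1^{x_i}(x_{i-1})$ the right gradient and $\beta_1^{x_i}(x_{i+1})$ the left gradient, the reverse of the paper's table indexing, so your pairing of ``every column'' with the left-to-right construction is backwards; by the row/column symmetry of the statement this is harmless once fixed, but it obscures exactly the point above.)

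The repair is precisely the device you relegate to a parenthetical afterthought, and it is the paper's actual proof: under, say, ``$m$ in every row'', build a \emph{finite cycle} of gradient pairs ${{L_1}\choose{R_1}},\ldots,{{L_k}\choose{R_k}}$ with $L_{j+1}=-R_j$, $L_1=-R_k$, and $\phi(L_j,R_j)=m$ for all $j$. One constructs it by your one-directional greedy step: each new forced row index $-R_j$ lies in the finite set $\widetilde{\mathcal A}_1$, so by pigeonhole some row index recurs and the stretch between the two occurrences closes into a cycle. Realizing the resulting periodic bi-infinite sequence of compatible gradient pairs by actual finite heights (your $\pm 100$ trick) gives a $\sigma$-periodic configuration with $\Phi({\bf x})=\rho^m({\bf x})$, so every cell — not just those on one side of the seed — is covered. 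In short: the one-directional induction is sound and matches the paper's step, but without the cycle-closing argument (or some substitute for the opposite half-line) the proof only establishes the weaker statement in which $m$ appears in every row \emph{and} every column; promote the cycle construction from remark to main argument.
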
 

\begin{proof}
Suppose that the nonzero value m occurs in every row of the local rule
table for the sand automaton $\Phi$.  Note that if $f\circ
\Pi(x) = ({{L_i}\choose{R_i}})_{i\in \mathbb Z}$, then $
L_{i+1}=-R_{i} $ for each $i$. Conversely, if
$({{L_i}\choose{R_i}})_{i\in \mathbb Z}\,\in f(\Pi(X))$, is such that
each $ L_{i+1}=-R_{i} $, then there is some ${\bf x} \in X$ such that
$f(\Pi(x)) = ({{L_i}\choose{R_i}})_{i\in \mathbb Z}\,\in f(\Pi(X))$.
To find a vertical inducing point then, it is sufficient to find a
cycle ${{L_1}\choose{R_1}},\ldots , {{L_k}\choose{R_k}}$ where
$L_{i+1} = -R_i, \,$ for $1\leq i <k$, $L_1 = -R_k$ and such that
$\phi{{L_i}\choose{R_i}} = m, \,$ for each $1\leq i \leq k$.  First
select any ${{L_1}\choose{R_1}}$ such that $\phi(L_1,R_1) = m$.  If
$R_1 = -L_1$ then we are done.  If not, select ${{L_2}\choose{R_2}}$
such that $L_2 = -R_1$ and $\phi(L_2,R_2) = m $ - this this can be
done because the local rule table has the value m in every row. If
$R_2 = -L_1 $ or $ R_2 = -L_2$ then we are done; if not continue this
process until we find ${{L_1}\choose{R_1}}\ldots {{L_k}\choose{R_k}}$
such that for some $1\leq j\leq k$, $L_{j}=-R_{k}$ - this has to occur since the gradient pair entries come from a finite alphabet. The desired
cycle is ${{L_j}\choose{R_j}}\ldots {{L_k}\choose{R_k}}$.  The proof
when there is a nonzero m in every column is similar except that newly
selected gradient pairs will come before the current gradient pairs.
\end{proof}

Recall that 
an $n \times n$ {\em latin square} is a table filled using an alphabet $\mathcal A$ of size $n$, where each row and each column has exactly one occurrence of each member of $\mathcal A$.   Theorem 
\ref{nonzerovalue} and Corollary \ref{vertical_implies_not_equicontinuous} then imply the following:

\begin{corollary}
If a radius one sand automaton $\Phi$ has a Latin square local rule table then $\Phi $ is not equicontinuous. 
\label{LatinSquare}
\end{corollary}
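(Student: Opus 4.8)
The plan is straightforward: I want to combine the two results that immediately precede the corollary, together with the definitional fact that a Latin square is precisely the kind of rule table to which Theorem \ref{nonzerovalue} applies. First I would observe that if $\Phi$ is a radius one sand automaton whose local rule table is an $n \times n$ Latin square on the alphabet $\mathcal A$, then every symbol of $\mathcal A$ — in particular every \emph{nonzero} symbol, and there is at least one since $n \geq 2$ for any genuine sand automaton (the alphabet $\widetilde{\mathcal A}_1$ has five elements) — occurs exactly once in each row and exactly once in each column. So there certainly exists a nonzero $m$ appearing in every row (indeed every nonzero $m$ does).

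The next step is to invoke Theorem \ref{nonzerovalue}: since there is a nonzero $m$ occurring in each row of $\Phi$'s local rule table, $\Phi$ admits a vertical inducing point of nonzero order. Then I apply Corollary \ref{vertical_implies_not_equicontinuous}, which says that a sand automaton admitting a vertical inducing point of nonzero order has $\Phi_{{{1}\choose{0}}}$ not equicontinuous. Finally, since $(X,\Phi) \cong (S_{{{1}\choose{0}}}, \Phi_{{{1}\choose{0}}})$ by Lemma \ref{sand=cellular} and equicontinuity is a topological conjugacy invariant, $\Phi$ itself is not equicontinuous. That is the whole argument.

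Honestly, there is no real obstacle here — the corollary is a packaging of Theorem \ref{nonzerovalue} and Corollary \ref{vertical_implies_not_equicontinuous}, exactly as the sentence preceding it advertises. The only point that needs a moment's care is making sure the alphabet of the rule table genuinely contains a nonzero element so that the hypothesis of Theorem \ref{nonzerovalue} is met; for radius one sand automata the codomain of $\phi$ is $\mathcal A_{r+1} = \mathcal A_2 = \{-2,-1,0,1,2\}$, so a Latin square structure (if one wishes to be pedantic, on whatever subalphabet the table actually uses) forces repetition of some nonzero value across rows as long as the table is not identically zero, and an all-zero table is not a Latin square. So the proof reads:

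\begin{proof}
Since the local rule table of $\Phi$ is a Latin square, each symbol occurring in it appears exactly once in every row and every column; as the table is not constantly $0$ there is a nonzero symbol $m$ appearing in every row. By Theorem \ref{nonzerovalue}, $\Phi$ admits a vertical inducing point of nonzero order, and hence by Corollary \ref{vertical_implies_not_equicontinuous}, $\Phi_{{{1}\choose{0}}}$ is not equicontinuous. Since equicontinuity is a topological invariant and $(X,\Phi)\cong(S_{{{1}\choose{0}}},\Phi_{{{1}\choose{0}}})$ by Lemma \ref{sand=cellular}, $\Phi$ is not equicontinuous.
\end{proof}
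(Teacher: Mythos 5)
Your proof is correct and is exactly the argument the paper intends: the corollary is stated as an immediate consequence of Theorem \ref{nonzerovalue} and Corollary \ref{vertical_implies_not_equicontinuous}, and your write-up simply makes explicit the (valid) observation that a Latin square table necessarily has some fixed nonzero $m$ in every row, plus the conjugacy step from Lemma \ref{sand=cellular}. No gaps; same approach as the paper.
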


 Let $\Phi $ be a radius 1 linear sand automaton.
Then the local rule for $\Phi$, denoted by $\phi$, is a homomorphism on
a cyclic group with group operation addition $\oplus$.  In particular
this implies that $\phi(L,R) = \alpha L
\oplus \beta R $, where $\alpha $ and $\beta \in \mathbb{Z}_5 $. This
implies  that $\Phi$'s local rule table is either a latin square, or has a nonzero element on the anti-diagonal.
\begin{proposition}
Let $\Phi $ be a radius 1 linear sand automaton.  
Then $\Phi$ admits a vertical inducing point, and so is not equicontinuous.
\label{generalEqiu}
\end{proposition}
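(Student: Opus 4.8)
The plan is to reduce Proposition~\ref{generalEqiu} to the earlier machinery by a short case analysis on the coefficients $\alpha,\beta\in\mathbb Z_5$ of the homomorphism $\phi(L,R)=\alpha L\oplus\beta R$. By Corollary~\ref{vertical_implies_not_equicontinuous} it suffices to produce a vertical inducing point of nonzero order, and by Theorem~\ref{nonzerovalue} it is enough to show that for every radius-one linear sand automaton there is a nonzero $m\in\mathbb Z_5$ appearing in every row (or in every column) of the local rule table. So the real content is the dichotomy already flagged in the paragraph preceding the statement: the local rule table of a radius-one linear sand automaton is either a latin square, or it has a nonzero entry on the anti-diagonal.

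First I would dispose of the degenerate cases. If $\alpha=\beta=0$ then $\phi\equiv 0$, the zero map; but then the configuration of all sinks and sources is not the only constraint --- actually here one must be slightly careful, since $\phi\equiv 0$ makes $\Phi$ the identity, which is equicontinuous. I would therefore note that a genuine \emph{linear} sand automaton in the sense of Section~\ref{linear} is built from $\gamma_*$-type maps, and in any case the intended reading (consistent with the $\Gamma$ example) is that $\phi$ is a nonzero homomorphism; alternatively one observes that if $\phi\equiv 0$ then every point is fixed, hence vertically inducing of order $0$, and the claim "not equicontinuous" simply needs the hypothesis that we are in a nondegenerate case. Assuming then $(\alpha,\beta)\neq(0,0)$: if $\beta\neq 0$, then for each fixed $L$ the map $R\mapsto \alpha L\oplus\beta R$ is a bijection of $\mathbb Z_5$ (since $\beta$ is a unit in $\mathbb Z_5$), so every row of the table is a permutation of $\mathbb Z_5$; in particular the nonzero value $m=1$ (say) appears in every row. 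Symmetrically, if $\alpha\neq 0$ then $m=1$ appears in every column. In either situation Theorem~\ref{nonzerovalue} gives a vertical inducing point of nonzero order --- the order is exactly that common value $m\neq 0$, since along the cycle of gradient pairs each cell is incremented by $m$, i.e. $\Phi({\bf x})=\rho^{m}({\bf x})$.

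The one subtlety I would make explicit is that the ``value $m$ in every row'' hypothesis of Theorem~\ref{nonzerovalue} must be realised by an actual configuration, i.e. that the cycle of gradient pairs ${{L_1}\choose{R_1}},\dots,{{L_k}\choose{R_k}}$ with $L_{i+1}=-R_i$, $L_1=-R_k$, and $\phi(L_i,R_i)=m$ comes from some ${\bf x}\in X$; but this is exactly the converse direction recorded inside the proof of Theorem~\ref{nonzerovalue} (that any sequence $({{L_i}\choose{R_i}})_i$ with $L_{i+1}=-R_i$ lies in $f(\Pi(X))$), together with the fact that $f$ is a bijection $\widetilde{\mathcal A}_1\to\mathbb Z_5$, so one can lift the $\mathbb Z_5$-valued cycle back to gradient tuples before invoking that converse. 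I expect the main obstacle --- really the only one --- to be stating the nondegeneracy hypothesis cleanly so that the degenerate constant-zero homomorphism (which \emph{is} equicontinuous, being the identity) is either excluded by the definition of ``linear sand automaton'' or handled as a trivial separate remark; once that is pinned down, the proof is just the two-line observation that a nonzero coefficient makes every row (or every column) a permutation, followed by a citation of Theorem~\ref{nonzerovalue} and Corollary~\ref{vertical_implies_not_equicontinuous}.
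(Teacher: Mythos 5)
Your proof is correct, but it takes a different route from the paper's. The paper splits on the anti-diagonal of the rule table: if some entry $\phi(a,-a)$ is a nonzero $m$, the single gradient pair ${a\choose -a}$ is already a length-one cycle, giving a vertical inducing point directly; if the anti-diagonal is identically zero, it deduces $\alpha=\beta$, so the rule is $\alpha(L\oplus R)$, whose table is a rescaling of $\Gamma$'s Latin square and hence itself a Latin square, and then invokes Corollary~\ref{LatinSquare}. You instead split on the coefficients: $\beta\neq 0$ makes $R\mapsto \alpha L\oplus\beta R$ a bijection of $\mathbb Z_5$ for each fixed $L$, so the value $1$ appears in every row (symmetrically, $\alpha\neq 0$ puts it in every column), and Theorem~\ref{nonzerovalue} plus Corollary~\ref{vertical_implies_not_equicontinuous} finish. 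Your decomposition is shorter and arguably cleaner, since it funnels everything through Theorem~\ref{nonzerovalue} in one step; what the paper's anti-diagonal case buys is a byproduct it records separately as Proposition~\ref{generalVIP1} (a nonzero anti-diagonal entry yields a vertical inducing point for \emph{arbitrary}, not just linear, radius-one automata), together with an especially simple explicit inducing point in that case. You are also right to flag the degenerate homomorphism $\alpha=\beta=0$: there $\Phi$ is the identity, which is equicontinuous, so the statement implicitly requires a nontrivial homomorphism; note that the paper's own proof carries the same implicit assumption, since in its second case the claim that $\alpha(L\oplus R)$'s table is a permuted copy of $\Gamma$'s (hence a Latin square) fails when $\alpha=0$. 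Your remark about lifting the $\mathbb Z_5$-coded cycle back through the bijection $f$ to genuine gradient tuples before invoking the realizability step inside Theorem~\ref{nonzerovalue} is a sound precaution, and matches what that proof already asserts.
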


\begin{proof}
It is sufficient to show that there is a word $ {{L_0}\choose{R_0}}
\ldots {{L_n}\choose{R_n}}$ in $(\mathbb Z_{5}^{2})^{+} $ such that
$L_{i+1}=-R_{i}$ for $i=1,\ldots k-1$, $R_n = -L_0$ and there exists a
nonzero $m$ such that for each $i$, $\gamma(L_i, R_i)= m$.  Suppose
first that there is a nonzero value on the anti-diagonal of the local
rule table for $\Phi$.  Then there is a gradient pair
${{a}\choose{-a}}$ that returns a nonzero value m under $\phi$. In this case, the
word ${{a}\choose{-a}}$ is the desired one. If all anti-diagonal rule entries are 0, then for each $L\in \mathbb Z_{5}$, $ \alpha L \oplus \beta (-L) =0$.  This implies that $\alpha L = \beta L$, so that $ \alpha
= \beta$.  Thus $\phi =\alpha L \oplus \alpha
R = \alpha(L \oplus R) $.  But the local rule $L \oplus R$ corresponds
to the linear sand automaton $\Gamma$ which has a Latin square as a
local rule table.  Therefore $ \alpha(L \oplus R) $'s rule table  is a permutation
of this local rule table and so also a
Latin square. Corollary \ref{LatinSquare} now yields the result.
\end{proof}

Note that linearity is not used in the case where there is a nonzero value on the anti-diagonal of the local rule table. Thus we can make  the following statement for general radius one sand automata:

\begin{proposition}
If a radius one sand automaton $\Phi$ has a nonzero value on the anti-diagonal 
of its local rule table then $\Phi$ has a vertical inducing point.  
\label{generalVIP1}
\end{proposition}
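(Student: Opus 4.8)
The plan is to reuse the mechanism of the proof of Theorem~\ref{nonzerovalue}, but applied to a one-element cycle. Suppose the sand automaton $\Phi$ has local rule table with a nonzero entry on the anti-diagonal; that is, there exists $a\in\mathcal A_1^{\widetilde{\phantom{x}}}=\widetilde{\mathcal A_1}$ (equivalently, after relabelling, an element of the indexing alphabet for the rows and columns) such that the gradient pair $\binom{a}{-a}$ satisfies $\phi\!\left(\binom{a}{-a}\right)=m$ for some nonzero $m$. The key observation, already isolated in the proof of Theorem~\ref{nonzerovalue}, is that a bi-infinite sequence $\left(\binom{L_i}{R_i}\right)_{i\in\mathbb Z}$ lies in $f(\Pi(X))$ precisely when it satisfies the compatibility relation $L_{i+1}=-R_i$ for every $i$. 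With $L_i=a$ and $R_i=-a$ for all $i$, this relation is satisfied, since $L_{i+1}=a=-(-a)=-R_i$. Hence there is a configuration ${\bf x}\in X$ with $f(\Pi({\bf x}))=\left(\binom{a}{-a}\right)_{i\in\mathbb Z}$.

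First I would make explicit what such an ${\bf x}$ looks like in order to be sure it can actually be realised in $X$: the condition $\beta_1^{x_i}(x_{i-1})$ relativises to (the symbol corresponding to) $a$ and $\beta_1^{x_i}(x_{i+1})$ relativises to $-a$. Concretely one can take a spatially periodic staircase configuration where the consecutive differences $x_{i+1}-x_i$ are constant and chosen so the gradients come out right (for instance, if $a$ corresponds to a finite gradient value $k$, take $x_i=-ki$; if $a=\pm\infty$, take a configuration with consecutive differences $\geq 2$ in the appropriate direction, alternating up and down so every site sees $\infty$ on one side and $-\infty$ on the other). Either way such a configuration exists, and by construction $(\Phi({\bf x}))_i = x_i + \phi\!\left(\binom{L_i}{R_i}\right) = x_i + m$ at every finite site $i$, while infinite sites are fixed by $\Phi$. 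Since $m\neq 0$, this says exactly that $\Phi({\bf x})=\rho^m({\bf x})$ (or $\rho^{|m|}$ composed with the downward analogue if $m<0$; one handles the sign by noting that a vertical inducing point "of nonzero order" only requires $\Phi({\bf x})$ to be a nonzero vertical translate of ${\bf x}$, and the statement is symmetric under reflection $x_i\mapsto -x_i$ which conjugates $\rho$ to its inverse). Thus ${\bf x}$ is a vertical inducing point of nonzero order.

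I expect the only real subtlety — and the thing worth spelling out carefully — to be the realisability step: checking that the constant gradient sequence $\left(\binom{a}{-a}\right)_{i}$ genuinely arises as $f(\Pi({\bf x}))$ for some ${\bf x}\in X$, including the boundary cases $a\in\{-\infty,\infty\}$ where one cannot literally use a linear staircase and must instead exhibit an explicit zig-zag configuration whose every site has a source on one side and a sink on the other. Everything else is a one-line instantiation of the cycle-finding argument of Theorem~\ref{nonzerovalue} with $k=1$, followed by the definition of a vertical inducing point; no induction or further machinery is needed beyond what is already in the excerpt. I would close by remarking that this gives an alternative, linearity-free route to one half of Proposition~\ref{generalEqiu}.
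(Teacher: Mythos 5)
Your proposal is correct and follows essentially the same route as the paper, which obtains this proposition from the anti-diagonal case in the proof of Proposition~\ref{generalEqiu}: take the length-one cycle $\binom{a}{-a}$ (valid since $L_{i+1}=-R_i$ holds trivially) and invoke the realisability of any gradient sequence satisfying $L_{i+1}=-R_i$, as isolated in the proof of Theorem~\ref{nonzerovalue}. One small slip in your illustrative aside: for $a=\pm\infty$ the constant gradient sequence $\binom{a}{-a}$ is realised by a monotone staircase with steps of size at least $2$ (e.g.\ $x_i=\mp 2i$), not by an alternating zig-zag, which would instead produce alternating peak and valley gradient pairs $\binom{-\infty}{-\infty}$ and $\binom{\infty}{\infty}$ rather than the required constant one.
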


 There are  $5^{25}$  radius one sand automata, and since there are $5^{20}$
local rule tables with only zero entries on the anti-diagonal, at least $99.968\%$ of the total number of radius 1 sand
automata have a vertical inducing point,  and so are not equicontinuous.

 An example of a
vertical inducing point for $\Gamma$ is $ \textbf{x} = \overline{0,\,1,\,3,\,1,\,0} \cdot
\overline{0,\,1,\,3,\,1,\,0} $. Then $f(\Pi({\bf x})) =    \overline{{{0}\choose{1}},
         {{-1}\choose{2}},\, {{-2}\choose{-2}},\, {{2}\choose{-1}},\,
         {{1}\choose{0}}}\cdot \overline{{{0}\choose{1}},\,
         {{-1}\choose{2}},\, {{-2}\choose{-2}},\, {{2}\choose{-1}},\,
         {{1}\choose{0}}}$,  so that  $\Gamma(\textbf{x}) = \textbf{x} +
         \overline{1}\cdot \overline{1} \neq \textbf{x}$.  In fact
         the sand automaton $\Gamma$ has an infinite number of
         vertical inducing points for each $-2 \leq n \leq 2$. We discuss this in the following section.
First though we show that while $\Gamma$ is not equicontinuous, it does  have equicontinuity points, putting it in Category (3) of the classification in \cite{dgm}. 
We say that a word ${\bf w}$ is \emph{blocking} for a sand automaton
$\Phi$ if there $\exists \, k,s \in \mathbb{N}$ such that $ 0 \leq k+s
\leq |{\bf w}|$ and such that whenever $  {\bf x}$ and $ {\bf y} \in
     [{\bf w}]_i $, then  $(\Phi^n({\bf x}))_{[i+k, i+|{\bf w}|-s]} =
     (\Phi^n({\bf y}))_{[i+k, i+|{\bf w}|-s]} $ for all natural $n$.

\begin{proposition}
Let ${\bf w} = 0 \, 3 \, 2 \, 3\, 0$.  Then ${\bf w}$ is blocking for the sand automaton $\Gamma$. Thus $\Gamma$ has equicontinuity points. 
\label{blockingw1}
\end{proposition}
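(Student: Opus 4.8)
The plan is to show that the word $\mathbf w = 0\,3\,2\,3\,0$ forces a rigid ``frozen'' pattern in all forward images under $\Gamma$, so that knowing the five coordinates $\mathbf x_{[i,i+4]} = 0\,3\,2\,3\,0$ determines $(\Gamma^n(\mathbf x))_{[i+k,\,i+4-s]}$ for appropriate $k,s$ and all $n$. I would first carry out a single explicit computation: assuming $\mathbf x_{[i,i+4]} = (0,3,2,3,0)$ (translation invariance of $\Gamma$ lets me take $i=0$), the gradient tuples at positions $0,\dots,4$ depend only on these five entries, since each uses only its nearest neighbours. The key observation is that the measuring tool $\beta_1^m$ saturates: the drop of $3$ from a ``$3$'' or ``$2$'' to a ``$0$'' always reads as $\pm\infty$, so the gradient tuples at positions $1,2,3$ are exactly $f\circ\Pi$ equal to $\binom{\infty}{2},\binom{-2}{-2},\binom{2}{\infty}$ (a down-slope–peak–up-slope pattern, or rather the relevant saturated versions), independent of what lies outside the window. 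Reading off the local rule table for $\Gamma$ (Figure~\ref{specificlocalruletable}), this shows $(\Gamma(\mathbf x))_1, (\Gamma(\mathbf x))_2, (\Gamma(\mathbf x))_3$ take prescribed values, and — crucially — that $(\Gamma(\mathbf x))_{[0,4]}$ again contains a window of the same saturated type (the heights may shift, but the relative pattern $\ast\,,3{+}c,\,2{+}c',\,3{+}c'',\ast$ with large drops persists). Here I expect to use the peak/valley preservation ideas from Section~\ref{surjective}: the middle coordinate is a peak, hence by Table~\ref{smalladd}-type reasoning its height only decreases relative to its neighbours, so the saturation of $\beta_1$ is maintained.

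The main step is then an induction on $n$. I would set up an invariant: there is a set $W$ of length-$5$ words (the orbit of $0\,3\,2\,3\,0$ under $\Gamma$, which I claim is finite and whose middle three ``shifted'' values are determined) such that if $\mathbf x_{[i,i+4]}\in W$ then (a) $(\Gamma(\mathbf x))_{[i,i+4]}\in W$, and (b) $(\Gamma(\mathbf x))_{[i+1,i+3]}$ is determined by $\mathbf x_{[i,i+4]}$ alone. Point (b) with $k=1$, $s=1$ is exactly the blocking property after the inductive step: if $\mathbf x,\mathbf y\in[\mathbf w]_i$ then $\Gamma^n(\mathbf x)_{[i,i+4]}$ and $\Gamma^n(\mathbf y)_{[i,i+4]}$ stay in lock-step in $W$, so their coordinates $[i+1,i+3]$ agree for every $n$. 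The finiteness of the orbit $W$ is what makes (a) meaningful: because $\Gamma$ is vertical-commuting and the three inner gradients are always the same saturated triple, $\Gamma$ acts on the window as a fixed vertical translation on the peak plus bounded perturbations on the shoulders, so the pattern is eventually periodic (in fact I expect it is essentially a vertical inducing behaviour localized to the window, mirroring the example $\overline{0,1,3,1,0}$ given just before the proposition).

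Finally, once $\mathbf w$ is shown to be blocking, equicontinuity points follow by the standard argument for one-dimensional cellular automata (valid here via the conjugacy of Lemma~\ref{sand=cellular}): any configuration $\mathbf x$ such that $\mathbf w$ occurs in $\mathbf x$ both to the left and to the right of the origin, with overlapping blocked regions covering a neighbourhood of $0$, is a point of equicontinuity — perturbations outside cannot propagate past the blocking walls, so $d(\Gamma^n\mathbf x,\Gamma^n\mathbf y)$ stays small for all $n$ once $\mathbf y$ agrees with $\mathbf x$ on a large enough central block. The main obstacle I anticipate is bookkeeping in the first computation: I must verify carefully that the boundary coordinates (positions $0$ and $4$ of the window, and their outside neighbours) never interfere, i.e. that the drops remain $\geq 2$ on the inside so that $\beta_1$ stays saturated at $\pm\infty$ for all $n$; this is where I would lean on the peak-preservation statement (Proposition~\ref{Gsurjective2} / Proposition~\ref{invariantgeneral}) to guarantee the ``$3,2,3$'' shoulders–peak configuration only sharpens, never flattens, under iteration.
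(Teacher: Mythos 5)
Your overall architecture (one explicit gradient computation for the window, then an induction showing the saturated boundary gradients persist, giving the blocking property with $k=s=1$, and finally the standard blocking-word-to-equicontinuity-point argument) is the same as the paper's. However, the computation at the heart of your argument is wrong, and the supporting results you lean on do not apply. For ${\bf w}=0\,3\,2\,3\,0$ the inner gradient tuples are not ${{\infty}\choose{2}},{{-2}\choose{-2}},{{2}\choose{\infty}}$: relativising at the cell of height $3$, its left neighbour $0$ lies \emph{below} it by $3$, so it reads $-\infty$ (i.e.\ $f$-value $-2$), and its right neighbour $2$ reads $-1$; the correct triple is ${{-2}\choose{-1}},{{1}\choose{1}},{{-1}\choose{-2}}$. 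In particular the middle cell (height $2$, neighbours $3,3$) is a shallow valley, not a peak, and the word is not locally in $\mathcal G$ (adjacent differences of $1$), so Propositions \ref{Gsurjective2} and \ref{invariantgeneral}, and Table \ref{smalladd}, cannot be invoked — those concern configurations all of whose consecutive differences are at least $2$. With your (incorrect) tuples the local rule would output $-1,+1,-1$ on the inner cells, the $3,2,3$ pattern would immediately flatten, and the claimed persistence would fail; so the error is not merely notational.

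The actual mechanism (and the paper's) is different from peak preservation: each of the three correct inner tuples maps to $+2$ under $\gamma$, so the inner cells rise \emph{rigidly} by exactly $2$ per step ($3+2n,\,2+2n,\,3+2n$), while the two boundary cells, starting at $0$, can change by at most $2$ per step and hence stay at most $2n$; therefore the drops at the window's edges remain $\geq 3$, the boundary readings stay saturated at $-\infty$, the inner gradient triple is literally unchanged, and induction gives that $(\Gamma^n({\bf x}))_{[i+1,i+3]}$ is determined for all $n$. Note also that your claim that the orbit $W$ of the window is finite is false as stated — the heights diverge linearly, and the two outer window coordinates are not determined by the window at all (they depend on cells outside it); the correct invariant is not finiteness of $W$ but constancy of the inner gradient triple together with the bound on the outer cells. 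Your final step (blocking word implies equicontinuity points, via Lemma \ref{sand=cellular}) is fine.
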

\begin{proof}
First note that the first statement will imply the second.
Note also  that $f\circ \Pi({\bf w}) = {{L_0}\choose{2}},\,{{-2}\choose{-1}},\,{{1}\choose{1}},\,{{-1}\choose{-2}},\,{{2}\choose{R_4}}$.  This block satisfies
 $\gamma ({{-2}\choose{-1}},\,{{1}\choose{1}},\,{{-1}\choose{-2}})= 
(2,\,2,\,2) $.  
Suppose that ${\bf x}, {\bf y} \in [{\bf w}]_{-2}$.  
Then
$(\Gamma({\bf x}))_{[-2,2]} = (x_{-2}^{\prime}, \, 5, \, 4, \, 5, \,
x_{2}^{\prime})$ and  $ (\Gamma({\bf y}))_{[-2,2]} = (y_{-2}^{\prime}, \, 5, \,
4, \, 5, \, y_{2}^{\prime})$, where $x_{i}^{\prime},y_{i}^{\prime} \leq
2, \,$ for  $i\, \in \{-2,2\}$.  Under the action of $\Gamma$ the central three cells
increase by 2 and the right most and left most cells can increase by
at most 2.  This implies that $ (f\circ \Pi(\Gamma({\bf x})))_{[-1,1]} $ and
$(f\circ \Pi(\Gamma({\bf y})))_{[-1,1]}$ equal  $(f \circ
\Pi ({\bf x}_{[-1,1]}))$, and $\Gamma$ thus adds 2 to these three positions. An inductive argument completes the proof. 
\end{proof}

There are 24 nontrivial homomorphisms $f: {\mathbb
  Z}_{5}^{2}\rightarrow {\mathbb Z}_{5}$.  If $f(L,R)= \alpha L \oplus
\beta R$ we use the notation $(\alpha,\beta)$ to represent $f$.  Eight
of these maps have blocking words of the type described in Proposition \ref{blockingw1}. These are $(a,a)$, where $ a \neq 0$, and
$(-1,2)$, $(1,-2)$, $(2,-1)$, $(-2,1)$.

\subsection{Local-rule-constant configurations}\label{local_rule_constant}

The concept of a vertical inducing point is a special case of a more
general type of configuration ${\bf x}$ where $(\Phi^n({\bf x}))_{n}$
is easily computable. Let us say that sand automaton $\Phi$ is {\em
  local-rule-constant} at ${\bf x}$ (or ${\bf x}$ is a $\Phi$
local-rule-constant point) if for some ${\bf y}\in X$, $\Phi^n({\bf
  x}) = {\bf x} + n\textbf{y}$ for each $ n \in \mathbb{N}$.  For example, if ${\bf
  x}$ is a vertical inducing point, then it is a $\Phi$
local-rule-constant configuration, with ${\bf y}$ constant.  However
the set of local-rule-constant configurations is much larger; we now describe a family of these points for $\Gamma$. 

Let $w=w_{1}\ldots w_{k} = {{L_1}\choose{R_1}} \ldots
{{L_k}\choose{R_k}}\, \in ({\mathbb Z}_{5}^{2})^{+}$ satisfy the
properties that $R_{i}=-L_{i+1}$ for $i=1, \ldots k-1$, and $\gamma_{*}
(w_{i})$ is constant $i=1, \ldots k$; then we say that $w$ is a {\em
  cycle segment}. In Table \ref{xplusny2}, we list some  cycle
segments for $\Gamma$. Consider the directed graph $\mathcal H$ whose
vertices are the cycle segments for $\Gamma$ listed in Table \ref{xplusny2}, and such that there is
an edge from vertex $V$ to vertex $V'$ if and only if the following are satisfied:
\begin{enumerate}
\item If the cycle segment corresponding to $V$ ends with a gradient pair
  ${{*}\choose{a}}$ then the  cycle segment corresponding to $V'$ starts
  with a gradient pair ${{-a}\choose{\star}} $.
\item If the cycle segment corresponding to $V$ ends with a
  gradient pair $ {{*}\choose{2}}$ and has order j, then the order of
  the cycle segment corresponding $V'$  must have order at least 
  $j$.
\item If the cycle segment corresponding to $V$ ends with a gradient pair $
  {{*}\choose{-2}}$ and has order j, then the cycle segment corresponding to 
$V'$  must have order at most $j$.
\end{enumerate}

Note that vertices $K$, $L$ and $M$ in $\mathcal H$ are isolated. Define ${\mathcal
  G}^{*}$ to be the set of all configurations ${\bf x}$ in $X$ such
that $f(\Pi({\bf x}))$ corresponds to an infinite path in $\mathcal
H$. In this context the infinite loops at $K$, $L$ and $M$ correspond
to $\Gamma$-fixed points in $X$.

\begin{theorem}
If ${\bf x} \in \mathcal G^{*}$,  then ${\bf x} $ is
$\Gamma$ local-rule-constant.
\end{theorem}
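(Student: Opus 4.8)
The plan is to show that if $f(\Pi(\mathbf{x}))$ traces an infinite path in $\mathcal H$, then applying $\Gamma$ shifts this path to another infinite path in $\mathcal H$ in a controlled way, so that the local rule value at each coordinate is the \emph{same} at every time step. Concretely, I would first establish the key invariance claim: if $\mathbf{x}\in\mathcal G^{*}$, then $\Gamma(\mathbf{x})\in\mathcal G^{*}$ as well, and moreover if $f(\Pi(\mathbf{x}))$ corresponds to a path passing through cycle segments $\dots V_{-1}V_0V_1\dots$ in $\mathcal H$, then $f(\Pi(\Gamma(\mathbf{x})))$ corresponds to a path through the \emph{same} sequence of vertices. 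Granting this, define $\mathbf{y}$ by $y_i = \gamma(L_i,R_i)$ where $\binom{L_i}{R_i}$ is the $i$th entry of $f(\Pi(\mathbf{x}))$; this is well-defined and constant along each cycle segment by definition of cycle segment. Then $\Gamma(\mathbf{x}) = \mathbf{x}+\mathbf{y}$ by the definition of the local rule $\gamma = \gamma_* \circ f \circ \Pi$, and since the path-in-$\mathcal H$ is unchanged, the same $\mathbf{y}$ works at the next step: $\Gamma(\Gamma(\mathbf{x})) = \Gamma(\mathbf{x}) + \mathbf{y} = \mathbf{x}+2\mathbf{y}$. A straightforward induction then gives $\Gamma^n(\mathbf{x}) = \mathbf{x}+n\mathbf{y}$ for all $n\in\mathbb N$, which is exactly the assertion that $\mathbf{x}$ is $\Gamma$ local-rule-constant.

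The heart of the argument, and the step I expect to be the main obstacle, is the invariance claim, i.e.\ verifying that the edge conditions (1)--(3) defining $\mathcal H$ are precisely what is needed to guarantee that $f(\Pi(\cdot))$ is preserved under one application of $\Gamma$. The mechanism is the same geography-preservation idea used in Proposition~\ref{Gsurjective2}: at coordinate $i$, the entry $x_i$ changes by $y_i = \gamma_*(L_i,R_i)$, and one must check that the new gradient pair $\binom{L_i'}{R_i'} = \binom{\beta_1^{x_i'}(x_{i-1}')}{\beta_1^{x_i'}(x_{i+1}')}$ equals the old one $\binom{L_i}{R_i}$. The condition $R_i = -L_{i+1}$ (built into the definition of cycle segment and reiterated in edge condition (1)) handles consistency of the left/right labels along the path; conditions (2) and (3) are exactly the monotonicity constraints ensuring that when a cycle segment ends on a right gradient $\binom{*}{2}$ (a steep downward step to the right) versus $\binom{*}{-2}$ (steep upward step), the change $y_{i+1}$ at the neighbouring coordinate does not overtake or invert the gradient $R_i$ at $i$ — i.e.\ $x_{i+1}' - x_i' \leq -2$ is preserved when $R_i=2$, etc. One needs to do this case analysis using Table~\ref{xplusny2} (the list of cycle segments and their orders) and the local rule table in Figure~\ref{specificlocalruletable}; there are finitely many cases because the cycle segments and the alphabet $\mathbb Z_5^2$ are finite.

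Finally I would note the degenerate cases: the isolated vertices $K$, $L$, $M$ with their self-loops correspond to constant gradient configurations that are $\Gamma$-fixed, so there $\mathbf{y}=\mathbf 0$ and the conclusion $\Gamma^n(\mathbf{x})=\mathbf{x}$ holds trivially. For a general infinite path in $\mathcal H$, concatenating cycle segments along edges yields a well-defined bi-infinite sequence in $f(\Pi(X))$ (using that $R_i=-L_{i+1}$ holds both within segments and across edges by condition (1), hence there is some $\mathbf{x}\in X$ realizing it, by the reasoning in the proof of Theorem~\ref{nonzerovalue}); the invariance claim then applies uniformly along the whole path, and the induction closes. The only subtlety worth a sentence is that $\mathbf{y}$ itself need not lie in $\mathcal G^{*}$ or be periodic — but this is irrelevant, since the definition of local-rule-constant only requires the single fixed increment $\mathbf{y}$, whatever it is.
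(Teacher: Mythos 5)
Your proposal is correct and follows essentially the same route as the paper's own proof: define $\mathbf y$ by applying $\gamma_*$ along the path, show that the gradient sequence (hence the path in $\mathcal H$) is preserved under one application of $\Gamma$ --- within a segment because all cells shift by the same order, and at segment boundaries because conditions (2) and (3) order the increments so that the $\pm 2$ gradients cannot be undone --- and then induct, treating $K$, $L$, $M$ as fixed points. One small correction to your gloss: $R_i=2$ means $x_{i+1}-x_i\ge 2$ (a steep rise to the right, not a drop), so the inequality you need to preserve is $x'_{i+1}-x'_i\ge 2$ rather than $\le -2$; with that sign fixed, your boundary case analysis is exactly the paper's argument.
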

\begin{proof}

 Choose an ${\bf x} \in X$ such
that $f \circ \Pi ({\bf x}) $ is represented by an infinite
path $ {\bf V} =\ldots V_{-2}\,V_{-1}\, \cdot
V_0\,V_1\,V_2\,\ldots \in \mathcal H$. Let ${\bf y}$ be the point in $X$ obtained by applying $\gamma_{*}$ to the representative in $f(\Pi(X))$ of $
{\bf V} $.  We claim that
$\Gamma^n({\bf x}) = {\bf x} +n{\bf y}$.  Suppose that  $V_{j}$ corresponds to the cycle segment  $(f(\Pi(x)))_{i_{j-1}+1}, \ldots (f(\Pi(x)))_{i_{j}}$, and it ends 
 with a gradient pair of the form ${{**}\choose{
    2}}$. Then in ${\bf x}$, $x_{i_{j}+1} -x_{i_{j}} \geq 2$. 
Geographically
speaking there is a ``steep hill" to the right of $x_{i_j}$. By condition
(2), $V_i$ can only be followed by an $V_{i+1}$ whose corresponding cycle segment  has order
at least that of $V_i$'s.  Therefore in $\Gamma({\bf x})$
the ``steep hill", if it changes, can only get steeper.
 Thus
$f(\Pi(\Gamma({\bf x})))_{i_j} = f(\Pi({\bf x}))_{i_j}$.
 Similarly if
$V_i$ ends with  a gradient pair of the form ${{**}\choose{ -2}}$, Condition
(iii) guarantees that if $f(\Pi({\bf x}))_{i_j} = -2 $ then
$f(\Pi(\Gamma({\bf x})))_{i_j} = -2$. This fact is true for all $j$.
    Finally if ${\bf V}$ corresponds to the infinite loop at  $K$, $L$ or $M$,
 then
$f(\Pi({\bf x}))$ is constant and ${\bf y} = \overline{0}\cdot
\overline{0}$, so that $\Gamma({\bf x}) = {\bf x}$ in which case
$\Gamma$ is (trivially) local-rule-constant.  Thus $f \circ \Pi (\Gamma{\bf x}) $ is also represented by ${\bf V}$.
By induction  it follows that ${\bf x}$ is
$\Gamma$-local-rule-constant.
\end{proof}

  For example, suppose that we want a configuration ${\bf x}$ that
  under the action of $\Gamma$ we have $\Gamma^n({\bf x}) = {\bf x} +
  n{\bf y}$ where $ {\bf y} = \overline{2},\cdot 1,1,1,1,\overline{2}
  $.  Then to build such a configuration using the cycles from the
  vertical inducing points we can let $f\circ \Pi({\bf x}) =
  \overline{{{2}\choose{0}},{{0}\choose{2}},{{-2}\choose{-1}},{{1}\choose{1}},{{-1}\choose{-2}}
  }\cdot
           {{2}\choose{-1}},{{1}\choose{0}},{{0}\choose{1}},{{-1}\choose{2}},
           \overline{{{-2}\choose{-1}},{{1}\choose{1}},{{-1}\choose{-2}},{{2}\choose{0}},{{0}\choose{2}}
           }$.  This point leads to an infinite number of
           configurations in the pre-image set $(f \circ
           \Pi({\bf x}))^{-1}$.  One such point is ${\bf x} =
           \overline{2,2,4,3,4},\cdot 2,1,1,2,\overline{5,4,5,3,3}$.

\begin{table}[h]
\begin{center}
\begin{tabular}{|c|c|c|}
\hline
Label & Cycle segment & Order \\ \hline
A  & ${{2}\choose{-1}}{{1}\choose{0}}{{0}\choose{1}}{{-1}\choose{2}}$ & 1 \\ \hline
B  & ${{-2}\choose{-2}} $ & 1 \\ \hline
C  & ${{2}\choose{0}}{{0}\choose{2}}$ & 2 \\ \hline
D & ${{-2}\choose{-1}}{{1}\choose{1}}{{-1}\choose{-2}}$& 2 \\ \hline
E  & ${{2}\choose{-2}} $ &0 \\ \hline
F  & ${{-2}\choose{2}}$ & 0\\ \hline
G  & $ {{-2}\choose{0}}{{0}\choose{-2}} $& -2 \\ \hline
H  & $ {{2}\choose{1}}{{-1}\choose{-1}}{{1}\choose{2}}$& -2 \\ \hline
I  & $ {{-2}\choose{1}}{{-1}\choose{0}}{{0}\choose{-1}}{{1}\choose{-2}}$ &-1  \\ \hline
J  & $ {{2}\choose{2}} $ & -1 \\ \hline
K  & ${{-1}\choose{1}} $ & 0\\ \hline
L  & ${{1}\choose{-1}} $ & 0\\ \hline
M  & ${{0}\choose{0}} $ & 0\\ \hline
\end{tabular}
\end{center} 
\caption{All of the cycle segments that can be used to create points $\Gamma^n({\bf x})= {\bf x} +n{\bf y}$.}
\label{xplusny2}
\end{table}
\begin{figure}[h]
\centerline{\includegraphics[scale =0.2]{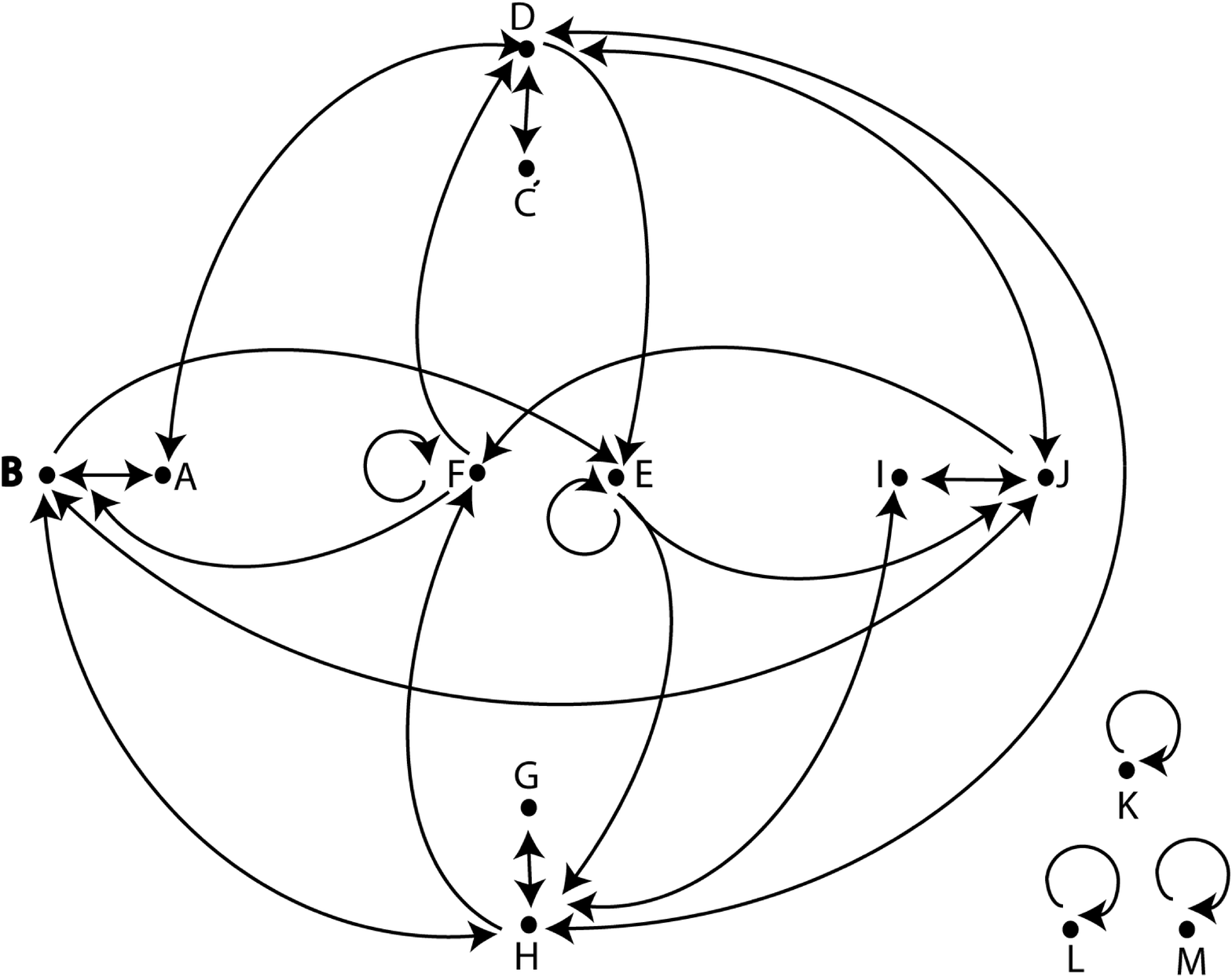}}
\caption{The graph for the set of all points made up of constant
  blocks which will add a consistent ${\bf y}$ at each time step. See
  Table \ref{xplusny2} for definitions of A,B,C,D,E,F,G,H,I,J,K,L,M.}
\label{graph2}
\end{figure}
\vspace{6pt}

 Note that the set
$\mathcal G$ defined in Section \ref{surjective} is contained in $\mathcal G^{*}$.
Note also  that $\mathcal G^{*}$ is
closed and 
 $\Gamma$-invariant.  
An
interesting question is whether $\mathcal G^{*}$ is an
attractor set for the sand automaton $\Gamma$.  We have conducted
simulations where the space time diagrams of several initial
configurations are generated.  Empirically what seems to be happening
is that the iterates of the initial configuration converge ``almost
everywhere" to a configuration in $\mathcal G^{*}$.  We describe what we mean by this:
define the words  $O=  {{-2}\choose{-2}}{{2}\choose{0}}{{0}\choose{2}}{{-2}\choose{-2}}, 
P=  {{-2}\choose{-1}}{{1}\choose{0}}{{0}\choose{1}}{{-1}\choose{-2}}$ and
$Q= {{2}\choose{2}}{{-2}\choose{0}}{{0}\choose{-2}}{{2}\choose{2}},
R= {{2}\choose{1}}{{-1}\choose{0}}{{0}\choose{-1}}{{1}\choose{2}}$.  
These sets of words are {\em 2-periodic} in the sense that if 
$f \circ \Pi (\Phi^n(\textbf{x})_{[i,i+3]}) = O$ then $f \circ \Pi (\Phi^{n+1}(\textbf{x})_{[i,i+3]}) = P$ and if $f \circ \Pi (\Phi^n(\textbf{x})_{[i,i+3]}) = Q$ then $f \circ \Pi (\Phi^{n+1}(\textbf{x})_{[i,i+3]}) = R$.   If we include these in a new graph ${\mathcal H}'$, then this seems to describes the asymptotic behaviour of $\Phi$ more accurately.  The graph ${\mathcal H}^{\prime}$ is presented in Figure \ref{graph3}.
This leads to the following conjecture. Similar to our definition of ${\mathcal G}^{*}$, let
\[\mathcal G': = \{ 
{\bf x}: f(\Pi({\bf x})) 
\mbox{ corresponds to an infinite path in }{\mathcal H }'
\}.\]

\begin{conjecture}
The set $ \mathcal G'$ is an attractor for $\Gamma$, in that if ${\bf x}\in X$, then $\lim_{n\rightarrow \infty}d(\Gamma^{n}(x), \mathcal G')= 0$. 
\end{conjecture}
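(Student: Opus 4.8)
\medskip
We outline a possible approach. The plan is to work entirely on the gradient side of the picture, reduce the conjecture to a statement about finite windows, and then attack that statement with a Lyapunov argument built on the ``features can only sharpen'' phenomenon already exploited in Propositions~\ref{Gsurjective2} and~\ref{invariantgeneral}.

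First reduce to finite windows. The set $e(\mathcal G')$ is closed, hence compact, in $e(X)$ and is $\sigma_H$- and $\sigma_V$-invariant, being the preimage of the edge shift of $\mathcal H'$ under the continuous sliding-block code ${\bf x}\mapsto f\circ\Pi({\bf x})$; so $d(\Gamma^n{\bf x},\mathcal G')\to 0$ is equivalent to: for every $N$ there is $n_0$ such that for all $n\ge n_0$ the window of $e(\Gamma^n{\bf x})$ on $[-N,N]^2$ coincides with that of some ${\bf g}\in\mathcal G'$. A finite admissible gradient-pair word extends to a bi-infinite path in $\mathcal H'$ (a routine property of edge shifts), and along such a path one may prescribe the heights arbitrarily on $[-N,N]$ and extend outwards consistently --- a clamped entry $\pm2$ only confines consecutive heights to a half-line --- exactly as in the proof of Lemma~\ref{Gsurjective3}. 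Thus the conjecture reduces to the claim:
\begin{quote}
for every $N$ the word $\bigl(f\circ\Pi(\Gamma^n{\bf x})\bigr)_{[-N,N]}$ is, for all large $n$, a factor of a bi-infinite path in $\mathcal H'$.
\end{quote}

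To prove this I would track, along the orbit, the (finite at each time) set of sites where the gradient-pair sequence fails to be admissible, together with the truncated ``slack'' $\min\bigl(C,\max(0,|x_{i+1}-x_i|-2)\bigr)$ at each cliff. Three ingredients are needed: (i) an admissible window of length $k$ is carried by $\Gamma$ to an admissible window of length $k-2$, the induced map on the vertices of $\mathcal H'$ being exactly the ``feature sharpening'' on the cycle segments $A,\dots,M$ of Table~\ref{xplusny2} together with the period-two exchanges $O\leftrightarrow P$ and $Q\leftrightarrow R$ --- a generalization of Propositions~\ref{Gsurjective2}, \ref{invariantgeneral} and of the blocking-word computation of Proposition~\ref{blockingw1}, the only genuinely new part being the $O,P,Q,R$ bookkeeping; (ii) since $\Gamma$ has radius one, inadmissibility can neither travel faster than one cell per step nor proliferate beyond the radius-one neighbourhood of an existing inadmissible site; and (iii) a Lyapunov estimate: the potential obtained by summing, over the inadmissible region, the local distance to admissibility plus the truncated slacks is non-increasing under $\Gamma$ and strictly decreases at any inadmissible site not lying in one of the period-two patterns. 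Here the clamping is essential: it is what forces cliffs to sharpen, so that once a site has both its gradients equal to $\pm\infty$ (i.e.\ has entered $\mathcal G$) it stays there forever. Granting (i)--(iii), fix $N$: by (ii) only boundedly many inadmissible sites can ever reach $[-N,N]$, by (iii) each is resolved within a bounded number of steps, and by (i) the window stays admissible thereafter.

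The main obstacle is (iii), and underneath it the fact flagged in the discussion following Table~\ref{smalladd}: the gradient-pair sequence of $\Gamma^n{\bf x}$ is \emph{not} a function of that of $\Gamma^{n-1}{\bf x}$, because a cliff of exact height $2$ behaves differently from a taller one; one must therefore carry the slack data along and show that at each site it is spent only finitely often. We expect this will require first settling whether $\mathcal H'$ is already the complete list of asymptotic patterns or must be enlarged further (the authors only claim it ``seems'' to describe the asymptotics), and then producing the right potential --- on the affine part of a configuration, where the gradient pairs satisfy a linear recurrence over $\mathbb Z_{5}$ whose orbits are eventually periodic and whose nontrivial period-two orbits are precisely $O,P,Q,R$, the linearity of $\gamma_{*}$ should make the bookkeeping tractable. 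The truly new ingredient, and the reason $\Gamma$ can behave unlike the (non-convergent) XOR cellular automaton, is a quantitative bound on how far a disturbance front can retreat before it settles --- equivalently, a uniform bound on the total decrease of the potential along any orbit.
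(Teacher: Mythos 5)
This statement is a conjecture in the paper --- the authors offer no proof, only simulation evidence --- and your submission is likewise not a proof but a research plan: every load-bearing step is deferred. Ingredient (iii), the Lyapunov estimate, you explicitly concede is open, and you even concede that it may require first determining whether $\mathcal H'$ is the correct asymptotic object at all. Ingredient (i) is also not available as stated: the claim that an admissible window maps to an admissible (shorter) window, with the induced vertex dynamics being feature-sharpening plus the $O\leftrightarrow P$, $Q\leftrightarrow R$ exchanges, founders on exactly the obstruction you flag later --- the gradient-pair word of $\Gamma^{n}{\bf x}$ is not a function of that of $\Gamma^{n-1}{\bf x}$. Whether a cliff recorded as $\pm 2$ relaxes to $\pm 1$ or stays clamped depends on the unrecorded slack, so the ``period-two'' behaviour of $O,P,Q,R$ (which the paper itself asserts only loosely) and the non-increase of your potential are precisely the statements needing proof, not tools one can invoke.

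There is also a structural flaw independent of the missing lemmas: the conjecture quantifies over all ${\bf x}\in X$, and for a generic configuration the set of sites where $f\circ\Pi({\bf x})$ fails to be $\mathcal H'$-admissible is infinite at every time, not ``finite at each time'' as your bookkeeping assumes. Your concluding step --- ``by (ii) only boundedly many inadmissible sites can ever reach $[-N,N]$'' --- does not follow from speed-one propagation when there are infinitely many defects outside the window: new inadmissibility can flow into $[-N,N]$ at every step forever, and a globally summed potential is infinite, so monotone decrease gives no convergence. To make this scheme work you would need a genuinely local, uniform statement (e.g.\ a quantitative bound, uniform in the configuration outside a window, on how long a fixed window can remain inadmissible), which is a much stronger claim than (iii) and is nowhere argued. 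As it stands the proposal identifies the right difficulties --- the slack bookkeeping, the possible incompleteness of $\mathcal H'$, the contrast with the non-convergent XOR automaton --- but proves none of them, so the conjecture remains exactly as open as in the paper.
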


\begin{figure}[h]
\centerline{\includegraphics[scale=0.3]{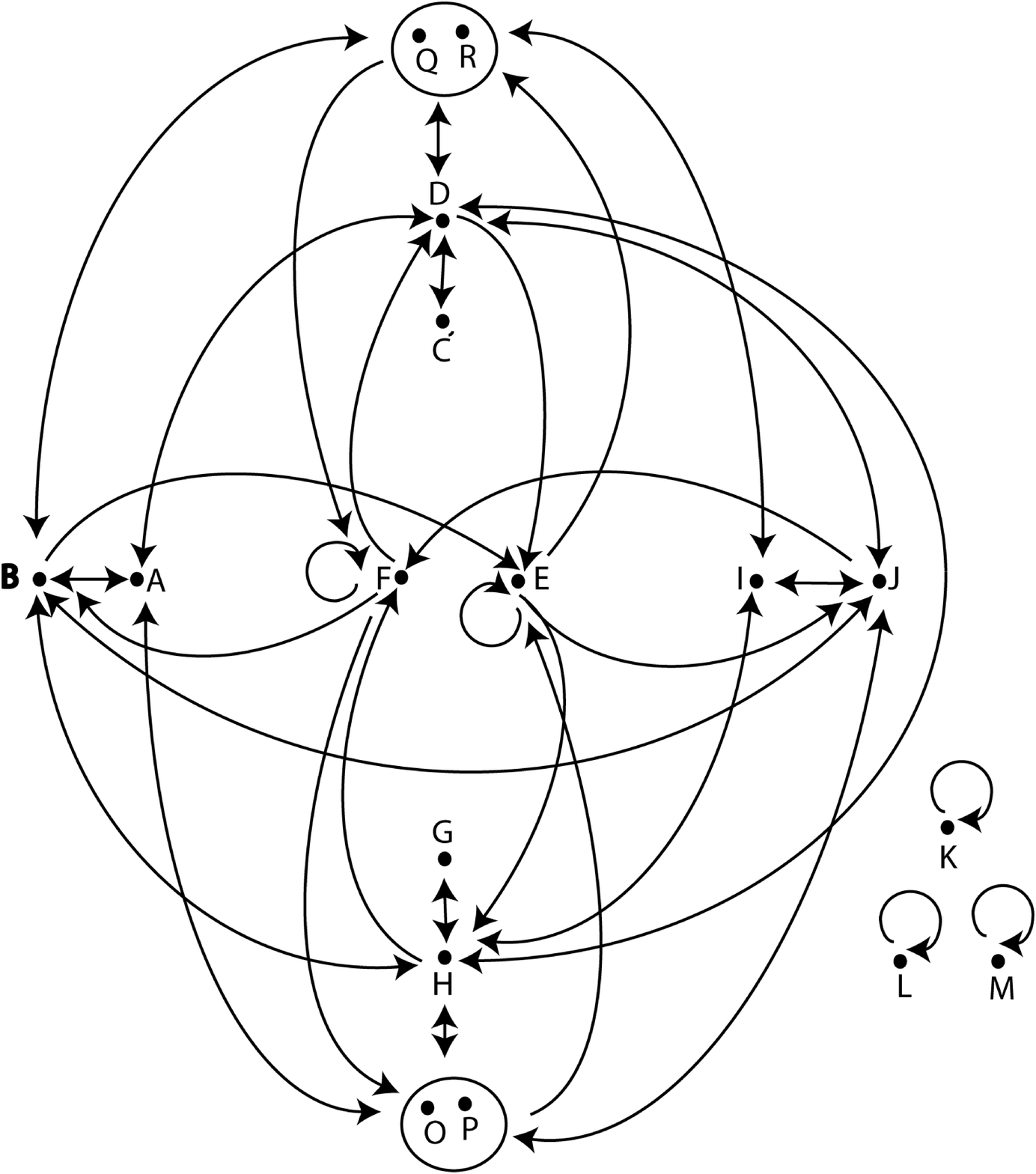}}
\caption{A potential attractor set for $\Gamma$.}
\label{graph3}
\end{figure}

\begin{figure}[htb]
\includegraphics[scale=0.15]{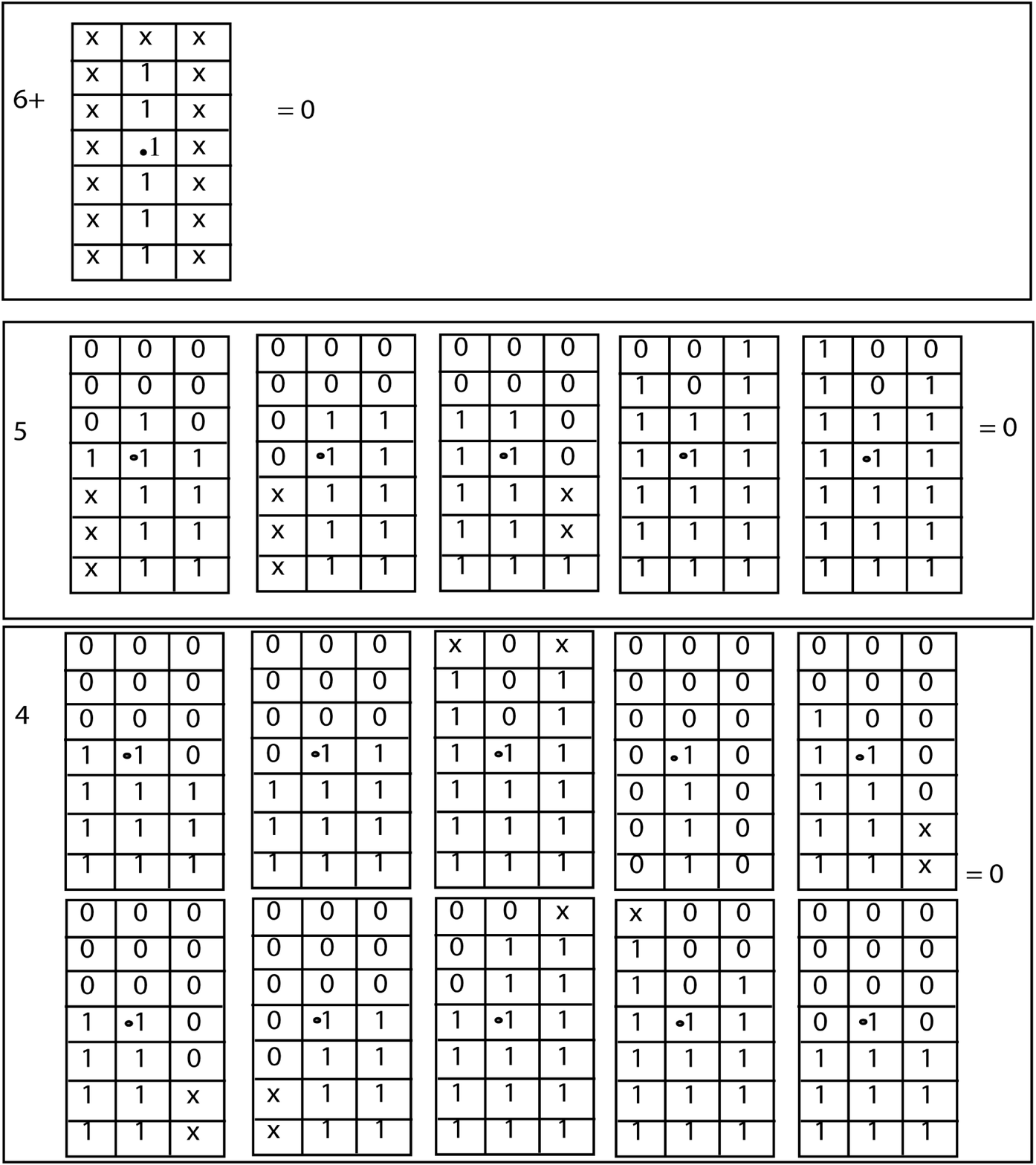} \\
\includegraphics[scale= 0.15]{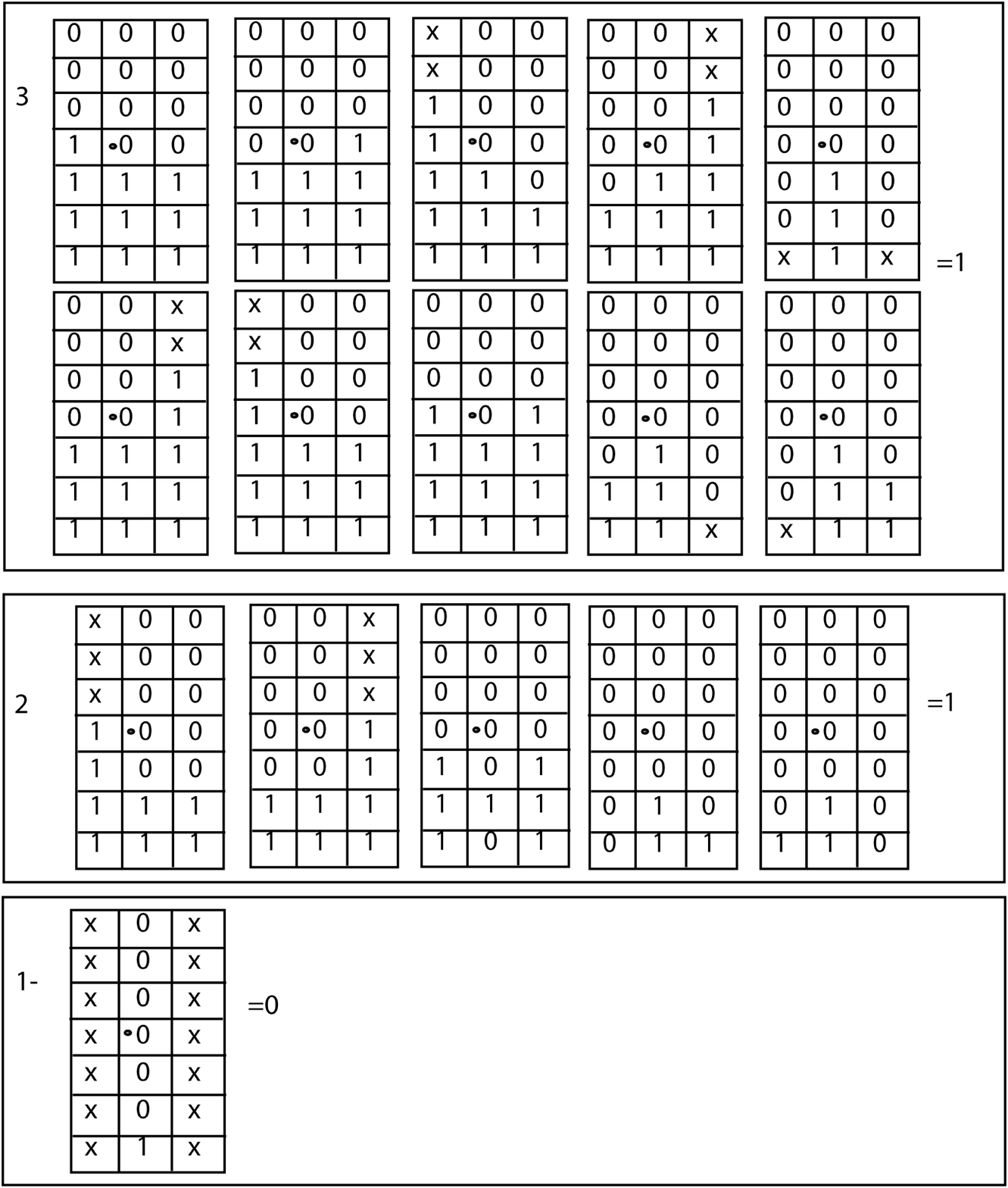}
\caption{The local rule table for $\Gamma_{{{1}\choose{0}}}$.  The number on the left of
  each rectangle represents the number of ones in the reference
  column.  An `x'
  represents a cell that can have any value.  The $\cdot$
  represents the central cell.  Only configurations where the central
  cell changes are listed.}
\label{SAtoCA2}
\end{figure}

{\footnotesize
\bibliographystyle{alpha}
\bibliography{bibliography}
}

\end{document}